\title{Bounding \(\chi\) by a fraction of \(\Delta\) for graphs without large cliques}
\author{Marthe Bonamy}
\address{CNRS, LaBRI, Universit\'e de Bordeaux}
\email{marthe.bonamy@u-bordeaux.fr}
\author{Tom Kelly \and Peter Nelson \and Luke Postle}
\address{University of Waterloo}
\email{t9kelly@uwaterloo.ca}
\email{apnelson@uwaterloo.ca}
\email{lpostle@uwaterloo.ca}
\begin{document}
\maketitle

\begin{abstract}
  The greedy coloring algorithm shows that a graph of maximum degree at most $\Delta$ has chromatic number at most $\Delta + 1$, and this is tight for cliques.
  Much attention has been devoted to improving this ``greedy bound'' for graphs without large cliques. 
  Brooks famously proved that this bound can be improved by one if $\Delta \geq 3$ and the graph contains no clique of size $\Delta + 1$.
  Reed's Conjecture states that the ``greedy bound'' can be improved by $k$ if the graph contains no clique of size $\Delta + 1 - 2k$.
  Johansson proved that the ``greedy bound'' can be improved by a factor of $\Omega(\ln(\Delta)^{-1})$ or $\Omega\left(\frac{\ln(\ln(\Delta))}{\ln(\Delta)}\right)$ for graphs with no triangles or no cliques of any fixed size, respectively.
  
  Notably missing is a \textit{linear} improvement on the ``greedy bound'' for graphs without large cliques.
  In this paper, we prove that for sufficiently large $\Delta$, if $G$ is a graph with maximum degree at most $\Delta$ and no clique of size $\omega$, then
  \[\chi(G) \leq 72\Delta\sqrt{\frac{\ln(\omega)}{\ln(\Delta)}}.\]
  This implies that for sufficiently large $\Delta$, if $\omega^{(72c)^2} \leq \Delta$ then $\chi(G) \leq \Delta/c$.

  This bound actually holds for the list-chromatic and even the correspondence chromatic number (also known as DP-chromatic number).  In fact, we prove what we call a ``local version'' of it, a result implying the existence of a coloring when the number of available colors for each vertex depends on local parameters, like the degree and the clique number of its neighborhood.  We prove that for sufficiently large $\Delta$, if $G$ is a graph of maximum degree at most $\Delta$ and minimum degree at least $\ln^2(\Delta)$ with list-assignment $L$, then $G$ is $L$-colorable if for each $v\in V(G)$, \[|L(v)| \geq 72\deg(v)\cdot\min\left\{\frac{\log_2(\chi(v) + 1)}{\ln(\deg(v))}, \frac{\omega(v)\ln(\ln(\deg(v)))}{\ln(\deg(v))}, \sqrt{\frac{\ln(\omega(v))}{\ln(\deg(v))}}\right\},\] where $\chi(v)$ denotes the chromatic number of the neighborhood of $v$ and $\omega(v)$ denotes the size of a largest clique containing $v$.  This simultaneously implies the linear improvement over the ``greedy bound'' and the two aforementioned results of Johansson.
\end{abstract}

\section{Introduction}

Let $G$ be a graph, and for each $v\in V(G)$, let $L(v)$ be a set which we call the \textit{available colors} for $v$.  If each set $L(v)$ is non-empty, then we say that $L$ is a \textit{list-assignment} for $G$.  If $k$ is a positive integer and $|L(v)|\geq k$ for every $v\in V(G)$, then we say that $L$ is a \textit{$k$-list-assignment} for $G$.  An \textit{$L$-coloring} of $G$ is a mapping $\phi$ with domain $V(G)$ such that $\phi(v)\in L(v)$ for every $v\in V(G)$ and $\phi(u)\neq\phi(v)$ for every pair of adjacent vertices $u,v\in V(G)$.  We say that a graph $G$ is \textit{$k$-list-colorable}, or \textit{$k$-choosable}, if $G$ has an $L$-coloring for every $k$-list-assignment $L$.  If $L(v) = \{1,\dots, k\}$ for every $v\in V(G)$, then we call an $L$-coloring of $G$ a \textit{$k$-coloring}, and we say $G$ is \textit{$k$-colorable} if $G$ has a $k$-coloring.  The \textit{chromatic number} of $G$, denoted $\chi(G)$, is the smallest $k$ such that $G$ is $k$-colorable.  The \textit{list-chromatic number} of $G$, denoted $\listchi(G)$, is the smallest $k$ such that $G$ is $k$-list-colorable.

In 1996, Johansson~\cite{J96-tri} famously proved that if $G$ is a triangle-free graph of maximum degree at most $\Delta$, then $\listchi(G) = O\left(\frac{\Delta}{\ln(\Delta)}\right)$.  Determining the best possible value of the leading constant in this bound is of general interest.  The best known lower bound, using $\Delta$-regular graphs, is $\frac{\Delta}{2\ln(\Delta)}$.  In 1995, Kim~\cite{K95} proved that the upper bound holds with a leading constant of $1 + o(1)$ for graphs of girth at least five.  In 2015, Pettie and Su~\cite{PS15} improved the leading constant in the upper bound for triangle-free graphs to $4 + o(1)$, and in 2017, Molloy~\cite{M17}, in the following theorem, improved it to $1 + o(1)$, matching the bound of Kim.
\begin{theorem}[\cite{M17}]\label{tri-free bound}
  If $G$ is a triangle-free graph of maximum degree at most $\Delta$, then
  \begin{equation*}
    \listchi(G) \leq (1 + o(1))\frac{\Delta}{\ln(\Delta)}.
  \end{equation*}
\end{theorem}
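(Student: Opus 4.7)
The plan is to use the classical semi-random (``nibble'') method combined with the Lov\'asz Local Lemma, following the broad outline of Johansson's argument but refining it to capture the optimal leading constant. I would first reduce to the case that $G$ is $\Delta$-regular, that $\Delta$ is large, and that $L$ is a $k$-list-assignment with $k=\lceil(1+\varepsilon)\Delta/\ln\Delta\rceil$ for a carefully chosen $\varepsilon=\varepsilon(\Delta)=o(1)$ tending to $0$ slowly with $\Delta$.

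The core step is a single-phase randomized procedure. Each vertex $v$ independently picks a tentative color $c_v\in L(v)$ uniformly at random, and is then ``activated'' with some small probability $p$; an activated vertex $v$ permanently takes color $c_v$ provided that no neighbor $u$ both chose $c_u=c_v$ and was also activated. After the phase we update each $L(v)$ by deleting colors successfully used by a neighbor, and we remove colored vertices from the graph. I would iterate this procedure $T\approx\log\ln\Delta$ times, maintaining the invariant $|L(v)|\geq(1+\varepsilon_t)\deg_{G_t}(v)/\ln(\deg_{G_t}(v))$ in the residual subgraph $G_t$ with a slowly growing $\varepsilon_t$. Once $|L(v)|/\deg(v)$ exceeds a sufficiently large constant, the leftover can be finished by a direct LLL argument.

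The critical estimate is where triangle-freeness enters. Fix a vertex $v$ and a color $c\in L(v)$; the probability that $c$ is removed from $L(v)$ in one phase equals the probability that some neighbor of $v$ successfully keeps $c$. For each such neighbor $u$ with $c\in L(u)$, the probability that $u$ keeps $c$ is roughly $\frac{p}{|L(u)|}\prod_{w\in N(u)\setminus\{v\}}\bigl(1-\tfrac{p\,\mathbf{1}[c\in L(w)]}{|L(w)|}\bigr)\approx \frac{p}{|L(u)|}\exp(-p\deg(u)/|L(u)|)\approx \frac{p}{|L(u)|}\Delta^{-p}$, exponentially smaller than the naive $p/|L(u)|$. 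The triangle-free hypothesis is precisely what makes the factors in the product decomposition essentially independent: adjacencies inside $N(v)$ would create reinforcing conflicts and destroy the exponential gain. Summing over $u\in N(v)$, the expected loss in $|L(v)|$ is far smaller than the expected loss in $\deg(v)$, so the ratio improves each phase.

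To promote these expectations into high-probability statements, I would apply Talagrand's inequality to the Lipschitz functionals counting retained colors and surviving neighbors of each vertex, and then invoke the Lov\'asz Local Lemma on one bad event per vertex per tracked quantity to conclude that with positive probability every vertex satisfies the desired concentration simultaneously. The hard part is this concentration-plus-LLL step \emph{with the optimal constant}: a naive wasteful procedure bleeds a constant factor at each iteration, so pinning the final constant at $1+o(1)$ requires either refining the procedure (as in Molloy's entropy-compression reformulation, which sidesteps the Talagrand losses) or executing a very careful multi-round accounting that amortises those losses across the $\log\ln\Delta$ phases while consistently balancing $p$, $T$, and the schedule $\varepsilon_t$. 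This bookkeeping is the technically delicate heart of the argument.
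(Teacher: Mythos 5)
This statement is not proved in the paper at all: it is Molloy's theorem, cited from \cite{M17}, and the paper's own machinery (Theorem~\ref{molloy blackbox}, in the Molloy--Bernshteyn style of a \emph{single} uniformly random partial coloring analyzed with the lopsided Local Lemma and completed greedily) explicitly does \emph{not} recover the $1+o(1)$ constant --- its best in-house analogue, Theorem~\ref{local tri-free best constant}, only reaches $(4+\xi)\deg(v)/\log_2(\deg(v))$, i.e.\ a factor of about $4\ln 2$ off. Your proposal instead follows the older iterative semi-random (nibble) route of Johansson/Kim/Pettie--Su, so it is a genuinely different approach from the one the cited proof uses; Molloy's argument is precisely notable for \emph{abandoning} the multi-phase nibble in favor of a one-shot random partial coloring whose analysis (entropy compression, or Bernshteyn's Local Lemma reformulation) exploits that neighborhoods in a triangle-free graph are edgeless, so that colorings of $N(v)$ correspond to independent-set counts.

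The problem is that your sketch has a genuine gap exactly at the point that makes the theorem hard: the constant. The wasteful activation nibble with Talagrand-plus-LLL per phase is the method that historically stalls at a constant factor above optimal (Johansson's original constant, improved to $4+o(1)$ by Pettie and Su); each phase loses a constant fraction, and iterating $\Theta(\log\ln\Delta)$ times does not by itself amortise these losses to $o(1)$. You acknowledge this and offer two escapes --- ``refine the procedure as in Molloy's entropy-compression reformulation'' or ``very careful multi-round accounting'' --- but the first is not a refinement of your scheme, it is a different proof, and the second is left entirely unspecified, so the heart of the statement ($1+o(1)$ rather than $C+o(1)$) is not established. A secondary issue: your claim that triangle-freeness makes the one-phase product estimate ``essentially independent'' is too quick. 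Triangle-freeness only guarantees that $N(v)$ is an independent set; distinct neighbors $u_1,u_2\in N(v)$ may share many neighbors outside $N(v)$, so their retention events for a common color are correlated, and the concentration of $|L(v)|$ and of residual degrees across phases is exactly where Johansson had to introduce heavier machinery (tracking color weights/entropies). As written, the invariant maintenance step is not justified, and even if it were, the leading constant claimed in the theorem would not follow.
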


Johansson~\cite{J96-Kr} also proved that for any fixed $\omega\geq 4$, if $G$ is a graph of maximum degree at most $\Delta$ with no clique of size greater than $\omega$, then $\listchi(G) = O\left(\frac{\Delta\ln(\ln(\Delta))}{\ln(\Delta)}\right)$; however, the proof was never published.  Molloy~\cite{M17} proved the following stronger result, which holds even when $\omega$ is not fixed.

\begin{theorem}[\cite{M17}]\label{Kr-free bound}
  If $G$ is a graph of maximum degree at most $\Delta$ with no clique of size greater than $\omega$, then
  \begin{equation*}
    \listchi(G)\leq 200\omega\frac{\Delta\ln(\ln(\Delta))}{\ln(\Delta)}.
  \end{equation*}
\end{theorem}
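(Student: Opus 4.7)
The plan is to use the \emph{nibble / partial coloring} framework pioneered by Johansson, sharpened by a Tur\'an-type neighborhood argument, and verified through Talagrand's inequality together with the Lov\'asz Local Lemma. Set $k := \lceil 200\omega\Delta\ln\ln\Delta/\ln\Delta\rceil$ and let $L$ be any $k$-list-assignment. Throughout the argument I track the \emph{slack} $s(v) := |L(v)| - d(v)$ of the current partial list-coloring, where $d(v)$ is the degree in the uncolored subgraph; a greedy finish succeeds as soon as $s(v) > 0$ for every uncolored $v$, so the goal is to drive the slack strictly positive everywhere.

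A single \emph{round} of the random process does the following: independently activate each vertex $v$ with a small probability $p$ depending on $\omega$ and $\Delta$; each activated $v$ picks a candidate color $c(v)\in L(v)$ uniformly at random; $v$ retains $c(v)$ only if no neighbor of $v$ selected the same candidate. Retained colors become permanent, newly colored neighbors are removed from $d(v)$, and their retained colors are removed from $L(v)$. The central calculation compares the expected drops in $|L(v)|$ and $d(v)$. Treating neighbors as independent gives equal multiplicative decreases; the improvement comes from pairs of \emph{non-adjacent} neighbors $u_1,u_2 \in N(v)$ that independently choose the same candidate color, since such a pair contributes two reductions to $d(v)$ but only one to $|L(v)|$. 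Because $G$ is $K_\omega$-free, Tur\'an's theorem forces $N(v)$ to contain at least $\binom{d(v)}{2}/(\omega-1)$ non-adjacent pairs, translating the pairwise gain into a multiplicative improvement of $1+\Theta(1/\omega)$ in the ratio $|L(v)|/d(v)$.

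Concentration is supplied by Talagrand's inequality: because each retained color depends only on the candidate choices within distance two of $v$, both $|L'(v)|$ and $d'(v)$ are Lipschitz and admit short certificates, so each stays within $O(\sqrt{k\log k})$ of its expectation except with exponentially small probability. Since $|L(v)|\geq k$ is much larger than $\log^2\Delta$, this deviation is dwarfed by the expected multiplicative improvement, and the bad events have bounded local dependence, so the Lov\'asz Local Lemma produces a single partial coloring realizing the expected improvement at every vertex simultaneously. Iterating the round $\Theta(\omega\ln\ln\Delta)$ times compounds the per-round multiplicative gain $(1+\Theta(1/\omega))$ enough to push the ratio $|L(v)|/d(v)$ past $1$ starting from its initial value $\Theta(\omega\ln\ln\Delta/\ln\Delta)$, after which a greedy coloring completes the proof.

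The principal obstacle is the expected-value calculation of step two: converting the raw Tur\'an non-edge count into a genuine decrease of $\mathbb{E}[|L'(v)|]$ while honoring the ``retain iff no collision'' rule, without double-counting the savings or losing them to second-order terms. Pinning these lower-order terms down is what fixes the activation probability $p$ and ultimately the leading constant $200$; the Talagrand and LLL machinery, while technically involved, is comparatively standard once the per-round drift estimate is in hand.
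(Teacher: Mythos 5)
Your sketch is not the route this paper takes for this statement: the theorem is quoted from Molloy, and the machinery developed here (Theorem~\ref{molloy blackbox} together with Shearer's bound, Lemma~\ref{shearers bound}) reproves and strengthens it by analyzing a \emph{single} uniformly random partial coloring, bounding the expected number of surviving colors at $v$ through the number and average size of independent sets in $G[N(v)]$, applying the Lov\'asz Local Lemma once, and finishing greedily. You instead propose the older Johansson-style iterative nibble with a Tur\'an count of non-edges in neighborhoods, Talagrand concentration, and $\Theta(\omega\ln\ln\Delta)$ rounds. That is a genuinely different strategy, and historically it can be made to work, but as written your argument has a real gap at its core rather than being a complete alternative proof.

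The gap is the per-round drift estimate, which you yourself flag as the ``principal obstacle'' but never supply. The claim that each non-adjacent pair $u_1,u_2\in N(v)$ choosing the same candidate yields ``two reductions to $d(v)$ but only one to $|L(v)|$'' silently assumes that the relevant colors are shared: for list coloring the coincidence requires a common color in $L(u_1)\cap L(u_2)$ that moreover lies in (or corresponds to) $L(v)$, and an adversary can arrange the lists so that the Tur\'an-many non-adjacent pairs contribute essentially no such coincidences. So the raw non-edge count does not translate into a $(1+\Theta(1/\omega))$ multiplicative gain in $|L(v)|/d(v)$ without a much more careful accounting of how neighbors' palettes interact with $L(v)$ --- this is exactly the difficulty that the Shearer/independent-set counting in Molloy's proof (and in Lemma~\ref{expectation concentration lemma} here) is designed to bypass, by lower-bounding $\Pr[\mathrm{no\ neighbor\ of\ }v\mathrm{\ takes\ a\ color\ corresponding\ to\ }c]$ via $1/\mathrm{(number\ of\ independent\ sets)}$ rather than via pairwise collision savings. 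Secondary issues: the compounding arithmetic ($\Theta(\omega\ln\ln\Delta)$ rounds of gain $1+\Theta(1/\omega)$ starting from ratio $\Theta(\omega\ln\ln\Delta/\ln\Delta)$) is plausible only at the level of exponents, and nothing in the sketch controls the constants well enough to justify the specific factor $200$, nor the maintenance of the wasteful-round error terms (deviations $O(\sqrt{k\log k})$ accumulated over up to $\Theta(\ln\Delta)$ rounds when $\omega$ is as large as $\ln\Delta/\ln\ln\Delta$). Until the drift lemma is stated and proved in a form that survives adversarial lists, the proposal is an outline of Johansson's program, not a proof of the stated bound.
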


In 1998, Reed~\cite{R98} conjectured the following, sometimes referred to as ``Reed's $\omega, \Delta, \chi$ Conjecture.''

\begin{conjecture}[\cite{R98}]\label{reeds conjecture}
  If $G$ is a graph of maximum degree at most $\Delta$ with no clique of size greater than $\omega$, then
  \begin{equation*}
    \chi(G) \leq \left\lceil\tfrac{1}{2}(\Delta + 1 + \omega)\right\rceil.
  \end{equation*}
\end{conjecture}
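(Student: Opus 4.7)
The plan is to combine Reed's original probabilistic framework with an inductive argument on $\Delta$, acknowledging upfront that this conjecture is a central open problem and that what follows is a sketch of the standard attack rather than a full resolution.

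First I would partition $V(G)$ into a ``dense'' part $V_d$, consisting of vertices lying in a clique of size close to $\omega$, and a ``sparse'' part $V_s$, consisting of vertices with linearly many non-edges in their neighborhood. On $V_d$, the key observation is that a vertex $v$ in a near-$\omega$-clique $K$ shares most of its available colors with the other vertices of $K$, so extending a good coloring of $G - K$ to $K$ uses roughly $\omega$ rather than $\Delta + 1$ colors. On $V_s$, I would apply the standard ``naive coloring'' procedure: assign each vertex a uniformly random color from a palette of size $\lceil(\Delta+1+\omega)/2\rceil$, uncolor vertices with conflicts and those whose palette becomes too restricted, and verify via the Lov\'asz Local Lemma that the residual uncolored subgraph $H$ has maximum degree at most $\Delta - \omega$. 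A final greedy step then finishes $H$ with the reserved colors.

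A cleaner formalization would parameterize the argument by local invariants $\deg(v)$ and $\omega(v)$, in the spirit of the ``local'' theorem announced in the abstract, targeting an $L$-coloring whenever $|L(v)| \geq \lceil(\deg(v)+1+\omega(v))/2\rceil$, via a potential function tracking, for each $v$, the number of colors saved at $v$ by the partial coloring of its neighborhood. This local framing tends to make the inductive step cleaner because $\deg(v)$ and $\omega(v)$ both decrease monotonically under vertex deletion, so an averaging argument can absorb the boundary terms that typically obstruct induction on $\chi$.

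The main obstacle is the sharpness of the $\tfrac{1}{2}$ factor. All existing probabilistic tools --- Johansson's entropy-compression argument, Molloy's proofs of Theorems~\ref{tri-free bound} and~\ref{Kr-free bound}, and the local-lemma approach of Reed --- yield savings of order $\Delta/\ln(\Delta)$ or $\Delta\sqrt{\ln(\omega)/\ln(\Delta)}$ over the greedy bound, which are negligible compared to the $\tfrac{1}{2}(\Delta-\omega)$ saving demanded by the conjecture once $\omega$ is linear in $\Delta$. Closing this gap appears to require a genuinely new structural idea for the interaction between dense and sparse regions; the realistic hope from this plan is only to reproduce Reed's partial result $\chi(G)\leq \lceil(1-\varepsilon)(\Delta+1)+\varepsilon\omega\rceil$ for some explicit small $\varepsilon > 0$, which is already consistent with the paper's own sub-linear guarantee $\chi(G) \leq 72\Delta\sqrt{\ln(\omega)/\ln(\Delta)}$ stated in the abstract.
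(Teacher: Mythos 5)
The statement you were asked to prove is Reed's $\omega,\Delta,\chi$ Conjecture, which the paper does not prove and which remains open: the paper only cites \cite{R98} for the statement and records the known partial results, namely Theorem~\ref{epsilon reeds} for some unspecified $\varepsilon>0$, the value $\varepsilon=\tfrac{1}{26}$ of Bonamy, Perrett, and Postle, and $\varepsilon=\tfrac{1}{13}$ for the list version by Delcourt and Postle, all for large $\Delta$. So there is no proof in the paper to compare against, and your proposal --- as you concede in your final paragraph --- is not a proof of the stated bound either; it is an outline of Reed's original dense/sparse strategy, which is known to fall short of the factor $\tfrac12$.

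Two steps in the sketch would fail as written. First, one round of naive colouring from a palette of size $\lceil\tfrac12(\Delta+1+\omega)\rceil$ followed by the Local Lemma does not yield a residual uncoloured graph of maximum degree at most $\Delta-\omega$; what such arguments certify are savings of order $\Delta/\ln(\Delta)$ (or, as in this paper, $\Delta\sqrt{\ln(\omega)/\ln(\Delta)}$), which is negligible against the required $\tfrac12(\Delta-\omega)$ once $\omega$ is linear in $\Delta$ --- precisely the regime where the conjecture has content. Second, the dense-part claim that a near-$\omega$-clique $K$ can be completed using ``roughly $\omega$'' colours ignores the up to $\Delta-\omega+1$ neighbours of each vertex of $K$ outside $K$, whose colours can forbid almost the entire palette at that vertex; managing these external constraints is exactly where the known arguments lose the factor $\tfrac12$ and end up with a small explicit $\varepsilon$. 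The proposed local strengthening, $|L(v)|\geq\lceil\tfrac12(\deg(v)+1+\omega(v))\rceil$, is a strictly stronger open statement, so routing an induction through it removes no obstacle. The honest conclusion is the one you reach yourself: this plan can at best reproduce Theorem~\ref{epsilon reeds}-type bounds, and the conjecture itself is beyond the techniques of this paper, whose main results (Theorems~\ref{low omega thm} and~\ref{mixed local thm}) give only improvements of the greedy bound by a sublinear-in-$\Delta$ factor, not the linear interpolation between $\omega$ and $\Delta+1$ that Conjecture~\ref{reeds conjecture} demands.
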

It is possible that Conjecture~\ref{reeds conjecture} is also true for the list-chromatic number.  As evidence for his conjecture, Reed~\cite{R98} proved the following.

\begin{theorem}[\cite{R98}]\label{epsilon reeds}
  There exists $\varepsilon > 0$ such that the following holds.  If $G$ is a graph of maximum degree at most $\Delta$ with no clique of size greater than $\omega$, then
  \begin{equation*}
    \chi(G) \leq \left\lceil(1 - \varepsilon)(\Delta + 1) + \varepsilon\omega\right\rceil.
  \end{equation*}
\end{theorem}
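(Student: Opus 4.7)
The plan is to use a structural dichotomy between vertices whose neighborhoods are nearly complete and vertices whose neighborhoods contain many non-edges, and then exploit each case separately by the probabilistic method (Lov\'asz Local Lemma). Call a vertex $v$ \emph{$\varepsilon$-sparse} if $N(v)$ spans at least $\varepsilon\binom{\deg(v)}{2}$ non-edges, and \emph{$\varepsilon$-dense} otherwise. Since $G$ contains no $K_{\omega+1}$, every vertex that is $\varepsilon$-dense for sufficiently small $\varepsilon$ has $\deg(v)$ close to $\omega-1$, or else $N(v)$ contains a forbidden clique; more precisely, the $\varepsilon$-dense vertices partition (up to a small error) into \emph{dense sets} $D_1,\dots,D_r$, each inducing a subgraph that is nearly a clique of size at most $\omega$, with few edges leaving each $D_i$. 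This decomposition is the structural backbone of the argument.

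First I would establish the dense decomposition: show that any two $\varepsilon$-dense vertices joined by an edge have nearly identical neighborhoods, so the relation ``$u \sim v$ iff they share at least $(1-\sqrt{\varepsilon})\Delta$ neighbors'' is (up to cleanup) an equivalence on the dense vertices, with each class spanning a near-clique of size at most $\omega$. Then I would color each dense set $D_i$ efficiently: because $D_i$ has size at most $\omega$ and only $O(\sqrt{\varepsilon}\,\Delta)$ external edges per vertex, each $D_i$ can be colored using a palette of size roughly $\omega + O(\sqrt{\varepsilon}\,\Delta)$ rather than $\Delta+1$, producing a saving of $\Omega(1)\cdot(\Delta-\omega)$ colors per dense vertex, which is exactly the form required by the theorem.

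Next I would treat the sparse vertices by a randomized ``trial'' coloring. Assign each vertex a uniformly random color from $\{1,\dots,(1-\varepsilon)(\Delta+1)+\varepsilon\omega\}$ and keep the color only if no neighbor received it. The key calculation is that for a sparse vertex $v$, the expected number of colors appearing on $N(v)$ is strictly less than $\deg(v)$ by a factor of $\Omega(\varepsilon)$, because each non-edge in $N(v)$ contributes a positive probability of a color collision within $N(v)$. An application of the Local Lemma (using that the bad event at $v$ depends only on choices within distance $2$) produces, with positive probability, a partial coloring such that every sparse vertex has at least $\varepsilon\Delta/C$ colors unused in its neighborhood. The uncolored graph then satisfies the hypotheses needed to finish greedily, or by iterated applications of the same argument.

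Finally I would combine the two pieces: first color the dense sets with the efficient palette from Step one, then extend via the random reservation to the sparse vertices, and complete using a list-colouring argument on the residual graph, choosing $\varepsilon$ as a small absolute constant so that both the dense palette savings and the LLL computation go through simultaneously. The main obstacle I expect is the interface between the two regimes: edges between a dense set $D_i$ and a sparse vertex $v$ can force the lists at $v$ to shrink unpredictably after the dense sets are coloured, so the random phase has to be designed to tolerate arbitrary precommitted colours on the $D_i$'s. Managing this dependence cleanly, while keeping the failure probabilities in the LLL below the required threshold \emph{uniformly} in $\Delta$ and $\omega$, is the delicate part that determines the final value of $\varepsilon$; every other step is a more-or-less standard concentration and union-bound calculation.
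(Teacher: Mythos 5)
You should first note that the paper does not prove this statement at all: Theorem~\ref{epsilon reeds} is quoted from Reed~\cite{R98}, so your sketch can only be compared with Reed's known argument (also in Molloy and Reed's book). That argument does proceed via a sparse/dense decomposition and a random ``naive colouring'' phase analysed with the Local Lemma plus concentration, and your treatment of sparse vertices (many repeated colours in a sparse neighbourhood, bad events local to distance two) is broadly in that spirit, if loosely stated.

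Your dense phase, however, rests on a false structural claim, and it is exactly there that the key idea of Reed's proof is missing. It is not true that an $\varepsilon$-dense vertex has degree close to $\omega-1$, nor that the dense sets are near-cliques of size at most $\omega$: take $G$ to be the complement of a perfect matching on $\Delta+2$ vertices, so $\omega=(\Delta+2)/2$, yet every neighbourhood misses only about $\Delta/2$ pairs, every vertex is dense, and the unique dense set has size $\Delta+2\gg\omega$. Consequently your claim that each dense set can be coloured from a palette of size roughly $\omega+O(\sqrt{\varepsilon}\,\Delta)$ is both unjustified and false in general: if $G[D_i]$ is the complement of a disjoint union of $5$-cycles, then each vertex has only two non-neighbours in $D_i$ (so $D_i$ is as dense as you like), $\omega(G[D_i])=2|D_i|/5$, but $\chi(G[D_i])=3|D_i|/5$, which exceeds $\omega+o(\Delta)$. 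What actually saves colours on a dense set $D_i$ in Reed's proof is a matching in the complement: since an independent set of $\overline{G[D_i]}$ is a clique of $G$, we have $\alpha(\overline{G[D_i]})\leq\omega$, hence $\overline{G[D_i]}$ has a matching of size at least $(|D_i|-\omega)/2$, and assigning the two endpoints of each matched non-edge the same colour colours $D_i$ with about $(|D_i|+\omega)/2\leq(\Delta+1+\omega)/2$ colours; the technical work is to make these reused colours compatible with the external neighbours of $D_i$ (via a random assignment and Talagrand-type concentration) and to mesh this with the sparse phase. Without the complement-matching idea the dense case cannot be salvaged when $\omega$ is a constant fraction of $\Delta$, which is the regime where the theorem has content, so as written the proposal has a genuine gap.
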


Note that Theorem~\ref{epsilon reeds} holds for $\varepsilon = \frac{1}{2}$ if and only if Conjecture~\ref{reeds conjecture} is true.
In 2016, Bonamy, Perrett, and Postle~\cite{BPP17} proved that Theorem~\ref{epsilon reeds} holds for $\varepsilon = \frac{1}{26}$ when $\Delta$ is sufficiently large.  In 2017, Delcourt and Postle~\cite{DP17} proved that Theorem~\ref{epsilon reeds} holds for the list-chromatic number for $\varepsilon = \frac{1}{13}$ when $\Delta$ is sufficiently large.
Results from Ramsey Theory imply that Theorem~\ref{epsilon reeds} is not true for any value of $\varepsilon > \frac{1}{2}$; for example, Spencer~\cite{S77} showed the existence of a graph on $n$ vertices with independence number 2 (and thus chromatic number at least $n/2$) such that every clique has size at most $n^{\frac{1}{2} + o(1)}$.  The blowup of a 5-cycle, i.e.\ the Cartesian product of a clique and a 5-cycle, also demonstrates that Theorem~\ref{epsilon reeds} is not true when $\varepsilon > \frac{1}{2}$, and even that the rounding in Conjecture~\ref{reeds conjecture} is necessary.

Theorem~\ref{Kr-free bound} implies Conjecture~\ref{reeds conjecture} when $\omega = o\left(\frac{\ln(\Delta)}{\ln(\ln(\Delta))}\right)$.  It is natural to ask if a bound stronger than that of Conjecture~\ref{reeds conjecture} can be proved if $\omega = o(\Delta)$ even if $\omega \geq \frac{\ln(\Delta)}{200\ln(\ln(\Delta))}$.  Spencer's result implies that the bound can not be improved if $\omega = \Omega(\Delta^{1/2})$.  Considering this, we were motivated to answer the following question.

\begin{question}\label{chi bound question}
  Does there exist a function $f : \mathbb R\rightarrow \mathbb R$ such that, for every $c > 1$ and every graph $G$ of maximum degree at most $\Delta$ with no clique of size greater than $\Delta^{1/f(c)}$, we have $\listchi(G) \leq \Delta/c$?
\end{question}

Our first result in this paper is the following theorem.
\begin{theorem}\label{low omega thm}
  If $G$ is a graph of maximum degree at most $\Delta$ with no clique of size greater than $\omega$, then
  \begin{equation*}
    \listchi(G) = O\left(\Delta\sqrt{\frac{\ln(\omega)}{\ln(\Delta)}}\right).
  \end{equation*}
\end{theorem}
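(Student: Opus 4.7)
My plan is to derive Theorem~\ref{low omega thm} from a \emph{local} strengthening in which, for each vertex $v$, we only require
\[|L(v)| \ge 72\deg(v)\sqrt{\ln(\omega(v))/\ln(\deg(v))}.\]
The global statement then follows upon plugging in $\deg(v)\le\Delta$ and $\omega(v)\le\omega$. As a preprocessing step I would deal with vertices of degree below $\ln^2\Delta$ separately, colouring them greedily at the end using the $o(\Delta)$-term slack built into the constant $72$; so we may assume $\deg(v)\ge\ln^2\Delta$ throughout. The scheme should transfer verbatim to correspondence (DP) colouring, since the random colouring analysis only uses local symmetry of the lists.

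The core argument is a ``nibble''/wasteful colouring procedure in the spirit of Johansson and Molloy (as used for Theorem~\ref{Kr-free bound}). In each round, every colour in each list is activated independently with probability $p$, every vertex chooses one of its activated colours uniformly at random, and any vertex colliding with a coloured neighbour is uncoloured. Using Talagrand's inequality together with the Lov\'asz Local Lemma, I would show that with positive probability the excess ratio $|L(v)|/\deg(v)$ strictly improves by a constant multiplicative factor after each round, uniformly over $v$. Iterating until the ratio is sufficiently large then permits a greedy completion.

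The crux is the per-round probabilistic accounting that must beat Molloy's. After one round, the expected number of colours removed from $L(v)$ is approximately $p\deg(v)$ minus an \emph{equalising-coincidence} correction, which counts pairs of non-adjacent neighbours of $v$ that happen to receive a common activated colour and hence save a slot in $L(v)$. Molloy extracts a saving of order $1/\omega$ from this term via $K_{\omega+1}$-freeness; to achieve the stronger saving $\sqrt{\ln\omega/\ln\Delta}$, I would need a sharper quantitative estimate exploiting not merely the existence of non-adjacent pairs in $N(v)$ but a lower bound on the independence ratio (equivalently, an upper bound on the fractional chromatic number) of $G[N(v)]$ coming from $K_\omega$-freeness combined with second-moment cancellations within $N(v)$.

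I expect this equalising-coincidence estimate---carried out uniformly across all vertices, and carefully tuned to the choice of the nibble parameter $p$ so that the round-by-round improvements telescope into the final $\sqrt{\ln\omega/\ln\Delta}$ factor---to be the main technical obstacle. The surrounding Talagrand/Local Lemma concentration machinery, the bookkeeping of ``bad'' events across rounds, and the greedy finishing step are, by contrast, essentially routine adaptations of the framework developed in \cite{M17}.
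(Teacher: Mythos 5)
There is a genuine gap, and it sits exactly at the point you yourself flag as ``the main technical obstacle.'' Your plan reduces the theorem to an unproven ``equalising-coincidence estimate'' that would convert $K_\omega$-freeness into a per-round saving of order $\sqrt{\ln\omega/\ln\Delta}$, and you only express the expectation that ``second-moment cancellations within $N(v)$'' will deliver it. No mechanism is given for why excluding a clique of size $\omega$ yields a saving of this square-root form rather than the $\Omega(1/\omega)$-type (or $\omega\ln\ln\Delta/\ln\Delta$-type) saving that the Molloy/Shearer accounting already gives; this is precisely the new mathematical content of the result, and without it the proposal is a framework with the engine missing. In the paper the saving comes from a purely combinatorial counting lemma about $K_\omega$-free graphs: by Ramsey's theorem, every $s$-subset of an $n$-vertex $K_\omega$-free graph $H$ with $s=R(\alpha,\omega+1)\le\bigl(e(\alpha+\omega-1)/(\alpha-1)\bigr)^{\alpha-1}$ contains an independent set of size $\alpha$, and a double count of $\binom{n}{s}\big/\binom{n-\alpha}{s-\alpha}$ with $\alpha\approx\log(n)/(e\log\omega)$ gives $\log(\numind(H))\ge\log^2(n)/(2e\log\omega)$; combined with Alon's bound $\avgind(H)\ge\log(\numind(H))/\bigl(10\log(n/\log(\numind(H))+1)\bigr)$ this yields $\avgind(H)\ge\tfrac1{24}\sqrt{\log(\numind(H))/\log\omega}$, which is what produces the $\sqrt{\ln\omega/\ln\Delta}$ factor when fed into the colouring machinery. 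You would need this lemma (or an equivalent quantitative statement) to make your nibble accounting close, and nothing in your outline supplies it.

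A secondary point: the surrounding machinery you propose is also not the route that is known to work here. The proofs of Molloy and Bernshteyn that this argument builds on are not iterated-nibble arguments with Talagrand concentration and round-by-round ratio improvement; they analyse a \emph{single} uniformly random partial colouring (via entropy compression, respectively one application of the Lopsided Lov\'asz Local Lemma), where the relevant quantities are exactly $\numind$ and $\avgind$ of subgraphs of $N(v)$ spanned by colour classes, and then finish greedily. Reconstructing the result through a multi-round wasteful-colouring scheme would require redoing all of the iteration bookkeeping and is far from ``essentially routine''; but even granting that framework, the proof stands or falls on the independent-set counting estimate above, which your proposal does not prove. The preprocessing of low-degree vertices is a minor and fixable difference (the paper instead boosts the minimum degree by duplicating the graph and joining minimum-degree vertices to their copies, which also keeps the local clique numbers from increasing), but it does not affect the verdict.
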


Theorem~\ref{low omega thm} answers Question~\ref{chi bound question} in the affirmative with a function $f(c)$ that is quadratic in $c$;  for large enough $\Delta$, the function $f(c) = (72c)^2$ suffices.
Determining the best possible function $f$ that confirms Question~\ref{chi bound question} would be very interesting.
As mentioned, Spencer~\cite{S77} showed that $f(2)\geq 2$.  This result actually provides a lower bound on $f(c)$ that is linear in $c$.  Spencer~\cite{S77} proved that the Ramsey number $R(c, \omega)$ is at least $\Omega\left(\left(\omega/\ln(\omega)\right)^{\frac{c+1}{2}}\right)$ as $\omega\rightarrow\infty$ for fixed $c\geq 3$.  Therefore there exists a graph $G$ on $n$ vertices with no independent set of size $c$ (and thus chromatic number at least $n/(c - 1)$) and no clique of size $\omega$ where $n$ is at least $\omega^{\frac{c + 1}{2} - o(1)}$.  Since the maximum degree of a graph is at most its number of vertices, it follows that $f(c) \geq c/2 + 1$ if $c\in\mathbb N$.

The bound of Spencer~\cite{S77} was improved by Kim in~\cite{K95ramsey} for $c = 3$ by a factor of $\ln \omega$ (matching the upper bound of Ajtai, Koml\'os, and Szemer\'edi~\cite{AKS80} up to a constant factor), by Bohman in~\cite{B09} for $c = 4$ by a factor of $\sqrt{\ln\omega}$, and by Bohman and Keevash in~\cite{BK10} for $c\geq 5$ by a factor of $\ln^{\frac{1}{c - 2}}\omega$, but these improvements do not change the resulting lower bound on $f(c)$.

\subsection{Local Versions}

We actually prove a result much stronger than Theorem~\ref{low omega thm}.  One might wonder if the bounds on $|L(v)|$ supplied by Theorems~\ref{tri-free bound}, \ref{Kr-free bound}, \ref{epsilon reeds}, and \ref{low omega thm} can be relaxed to depend on local parameters, such as the degree of the vertex $v$, or the size of a largest clique containing $v$, rather than the global parameters $\Delta$ and $\omega$.  To that end, for a vertex $v$, we let $\deg(v)$ denote the degree of $v$, $\omega(v)$ denote the size of a largest clique containing $v$, and $\chi(v)$ denote the chromatic number of the neighborhood of $v$.

We are interested in proving that a graph $G$ is $L$-colorable whenever every vertex $v$ satisfies $|L(v)| \geq f(v)$ where $f(v)$ depends on parameters such as $\deg(v)$ and $\omega(v)$.  The archetypal example is the classical theorem of Erd\H os, Rubin, and Taylor \cite{ERT79} that a graph is \textit{degree-choosable} (meaning $L$-colorable for any list-assignment $L$ satisfying $|L(v)| = \deg(v)$ for every vertex $v$) unless every block is a clique or an odd cycle.  We call such a Theorem a ``local version.''  Our main result implies local versions of Theorems~\ref{tri-free bound}, \ref{Kr-free bound}, and \ref{low omega thm} simultaneously, although we do not match the leading constant in Theorem~\ref{tri-free bound}.

In fact, we prove the theorem for \textit{correspondence coloring}, a generalization of list-coloring introduced by Dvo\v{r}\'ak and Postle \cite{DP15} in 2015, and also known as \emph{DP-coloring}. We provide a definition in Section~\ref{prelim section}; the theorem as stated below can also be read as if $L$ is a list assignment. 

\begin{theorem}\label{mixed local thm}
  For all sufficiently large $\Delta$ the following holds.  Let $G$ be a graph of maximum degree at most $\Delta$ with correspondence assignment $(L, M)$.
  For each $v\in V(G)$, let
  \begin{equation*}
    f(v) = 72\cdot\min\left\{\sqrt{\frac{\ln(\omega(v))}{\ln(\deg(v))}}, \frac{\omega(v)\ln(\ln(\deg(v)))}{\ln(\deg(v))}, \frac{\log_2(\chi(v) + 1)}{\ln(\deg(v))}\right\}.
  \end{equation*}
  If for each $v\in V(G)$,
  \begin{equation*}
    |L(v)| \geq \deg(v)\cdot f(v)
  \end{equation*}
  and $\deg(v) \geq \ln^2(\Delta)$, then $G$ is $(L, M)$-colorable.
\end{theorem}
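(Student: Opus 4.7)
The plan is to follow a Molloy-style iterative partial coloring framework (a ``nibble'' or wasteful-coloring procedure), but adapted so that the ``savings'' at each vertex depend only on that vertex's local parameters $\omega(v)$, $\chi(v)$, and $\deg(v)$. Fix a minimum counterexample $(G,L,M)$. The idea is to perform a sequence of random partial $(L,M)$-colorings such that, with positive probability, after each step every still-uncolored vertex $v$ has its ratio of residual list size to residual degree improved by a multiplicative factor dictated by $f(v)$; after $O(1)$ iterations, a direct finishing step completes the coloring. The degree lower bound $\deg(v) \ge \ln^2(\Delta)$ will be what keeps concentration inequalities usable throughout; vertices whose residual degree drops below this threshold get set aside and finished greedily at the end using the slack built up in the ratio.

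For the nibble itself, I would independently activate each vertex $v$ with some probability $p(v)$ depending on $f(v)$, assign a uniform random color from $L(v)$, and keep it provided no $M_{uv}$-matched neighbor kept a conflicting color. The expected residual degree is then roughly $(1-q(v))\deg(v)$ for some retention parameter $q(v)$; the heart of the argument is to show that the expected residual \emph{list} size shrinks by a strictly smaller factor. This ``save'' comes from the observation that when $N(v)$ has structured colorings — few independent classes, or many constrained cliques — the colors assigned to $N(v)$ are forced to collide more often than in the uniform case, increasing the chance a color remains available for $v$. I would use Talagrand's inequality to concentrate the residual degree and residual list size at each vertex, and the Lopsided Lov\'asz Local Lemma on the dependency graph of the underlying random choices (taking care with the non-symmetric correspondence structure) to make the whole event positive-probability.

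Three separate save lemmas are needed, corresponding to the three branches of the minimum in $f(v)$. For the $\log_2(\chi(v)+1)/\ln(\deg(v))$ branch, decompose $N(v)$ into $\chi(v)$ independent sets and run the Molloy triangle-free-style entropy/second-moment argument on each class. For the $\omega(v)\ln\ln(\deg(v))/\ln(\deg(v))$ branch, bound the number of pairs in $N(v)$ with many common neighbors via $\omega(v)$ in the style of Johansson's $K_r$-free theorem, reproving Theorem~\ref{Kr-free bound} locally. For the new $\sqrt{\ln(\omega(v))/\ln(\deg(v))}$ branch — the source of Theorem~\ref{low omega thm} — the point is that iterating the nibble $\Theta\bigl(\sqrt{\ln(\deg(v))/\ln(\omega(v))}\bigr)$ times yields a gain per round of order $\sqrt{\ln(\omega(v))/\ln(\deg(v))}$ in the ratio, and after that many rounds the accumulated slack suffices for a greedy finish; this balance of ``gain per round'' against ``number of rounds'' is exactly what produces the square root.

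The main obstacle, I expect, is twofold. First, the save lemmas must be proved for correspondence rather than list coloring, where cliques in $G$ do not immediately translate to large monochromatic constraints on an $M$-matched set of colors; recovering enough symmetry — by restricting attention to ``canonical'' color classes or by exploiting the fact that a clique in $N(v)$ still forces distinct assignments across any correspondence — will require care. Second, and more seriously, weaving the three local savings into a \emph{single} iterative procedure is nontrivial: at each step different vertices may lie in different regimes, so the invariant must be formulated in terms of $f(v)$ itself and verified uniformly, rather than against one global target. Getting all three branches to survive the same random procedure, with constants that ultimately collapse to the single factor $72$, is the principal technical burden of the proof.
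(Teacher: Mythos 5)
Your framework differs from the paper's (the paper does not iterate a nibble: it extracts a single-shot statement, Theorem~\ref{molloy blackbox}, in which one uniformly random partial coloring is analyzed with the Lopsided Local Lemma \`a la Bernshteyn, and the uncolored vertices are then finished greedily in one pass ordered by a local target function $\listtarget(v)$), but that difference alone would not be fatal. The genuine gap is in the branch that carries all the novelty, $\sqrt{\ln(\omega(v))/\ln(\deg(v))}$. Your proposed mechanism --- run $\Theta\bigl(\sqrt{\ln(\deg(v))/\ln(\omega(v))}\bigr)$ rounds, each gaining a factor of order $\sqrt{\ln(\omega(v))/\ln(\deg(v))}$ in the list-to-degree ratio --- never says where that per-round gain comes from; Johansson/Molloy-type save lemmas for $K_\omega$-free neighborhoods give gains of order $\frac{\ln(\deg)}{\omega\ln\ln(\deg)}$ per unit of list (Shearer-type), not the claimed square-root gain, and asserting the latter per round is essentially restating the theorem to be proved. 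It also contradicts your own ``$O(1)$ iterations'' setup, and a number of rounds growing with $\Delta$ would force you to control accumulating error terms and shrinking lists, which your sketch does not address.

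What the paper actually does at this point is purely combinatorial and happens in one round: it proves a new lower bound on the average independent set size of a $K_{\omega+1}$-free graph $H$, namely $\avgind(H) \geq \frac{1}{24}\sqrt{\log(\numind(H))/\log(\omega)}$ (Lemma~\ref{average independent set size bound}). The engine behind it is Lemma~\ref{number of independent set size bound}: using the Ramsey bound $R(\alpha,\omega+1) \leq \binom{\alpha+\omega-1}{\alpha-1}$ and a double count of $s$-subsets, every $K_{\omega+1}$-free graph on $n$ vertices has at least $2^{\log^2(n)/(2e\log\omega)}$ independent sets, so $\log(n) \leq \sqrt{2e\log(\numind(H))\log(\omega)}$, which plugged into Alon's bound (Lemma~\ref{alons bound}) yields the square root. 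This quantity lower-bounds $\indthreshold(v)$ in Theorem~\ref{molloy blackbox} and is the sole source of the $\sqrt{\ln\omega/\ln\deg}$ term; the other two branches come analogously from Shearer's bound and Alon's bound. Without this (or an equivalent) counting lemma, your outline has no argument for the key branch. As a side remark, your worry about cliques interacting badly with the correspondence structure largely evaporates in the paper's setup: the clique hypothesis is used only through independent-set counting in induced subgraphs of $G[N(v)]$ (the sets of neighbors still colorable with the color matched to a given $c\in L(v)$), which is insensitive to how $M$ permutes colors.
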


Recently, Bernshteyn \cite{B17} proved that Theorems~\ref{tri-free bound} and \ref{Kr-free bound} hold for the \textit{correspondence chromatic number}, which is always at least as large as the list-chromatic number.  Our Theorem~\ref{mixed local thm} implies that ``local versions'' of these theorems are true for correspondence coloring, as follows.

\begin{corollary}\label{local tri-free}
  For some constant $C$ the following holds.  If $G$ is a triangle-free graph of maximum degree at most $\Delta$ with correspondence assignment $(L, M)$ such that for each $v\in V(G)$,
  \begin{equation*}
    |L(v)| \geq C\frac{\deg(v)}{\ln(\deg(v))},
  \end{equation*}
  and $\deg(v) \geq \ln^2(\Delta)$, then $G$ is $(L, M)$-colorable.
\end{corollary}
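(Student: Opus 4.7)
The plan is to obtain Corollary~\ref{local tri-free} as a direct specialization of Theorem~\ref{mixed local thm}. The decisive observation is that in a triangle-free graph $G$, for every vertex $v \in V(G)$ the neighborhood $N(v)$ induces no edges, so the chromatic number of $N(v)$ satisfies $\chi(v) \leq 1$, and consequently $\log_2(\chi(v)+1) \leq 1$.

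With $f(v)$ defined as in Theorem~\ref{mixed local thm}, this immediately gives, via the third term of the minimum,
\[
  f(v) \;\leq\; 72 \cdot \frac{\log_2(\chi(v)+1)}{\ln(\deg(v))} \;\leq\; \frac{72}{\ln(\deg(v))}.
\]
Any choice of constant $C \geq 72$ therefore guarantees that the hypothesis $|L(v)| \geq C\deg(v)/\ln(\deg(v))$ of Corollary~\ref{local tri-free} implies the hypothesis $|L(v)| \geq \deg(v)\cdot f(v)$ of Theorem~\ref{mixed local thm}. Combined with the shared hypothesis $\deg(v) \geq \ln^2(\Delta)$, Theorem~\ref{mixed local thm} then yields the desired $(L,M)$-coloring whenever $\Delta$ is above the ``sufficiently large'' threshold in that theorem.

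To handle the remaining case of $\Delta$ below this threshold, $\Delta$ is absolutely bounded, and one inflates $C$ so that $C\deg(v)/\ln(\deg(v)) \geq \Delta + 1$ for every $v$ (vertices with $\deg(v)\leq 1$ being trivial). Then $|L(v)| \geq \Delta + 1$ for every vertex, and a straightforward greedy argument --- processing vertices in any order and using that in a correspondence assignment each already-colored neighbor rules out at most one color in $L(v)$ --- produces the coloring. There is essentially no obstacle here: all the difficulty sits inside Theorem~\ref{mixed local thm}, and the corollary merely records the triangle-free specialization. The reason the minimum in Theorem~\ref{mixed local thm} contains the $\log_2(\chi(v)+1)$ term is precisely to make this reduction (and the correct $1/\ln(\deg(v))$ rate matching Theorem~\ref{tri-free bound}) automatic.
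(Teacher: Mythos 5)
Your proposal is correct and takes the same route the paper intends: Corollary~\ref{local tri-free} is a direct specialization of Theorem~\ref{mixed local thm}, and the decisive fact you identify --- that triangle-freeness forces $N(v)$ to be independent, so $\chi(v)\leq 1$ and $\log_2(\chi(v)+1)\leq 1$, making the third term of the minimum at most $1/\ln(\deg(v))$ --- is exactly why that term appears in the definition of $f(v)$. Your additional remark handling the finitely many $\Delta$ below the threshold of Theorem~\ref{mixed local thm} by inflating $C$ and coloring greedily is a reasonable way to close the gap between the two statements' quantifiers.
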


\begin{corollary}\label{local Kr-free}
  For some constant $C$ the following holds.  If $G$ is a graph of maximum degree at most $\Delta$ with correspondence assignment $(L, M)$ such that for each $v\in V(G)$,
  \begin{equation*}
    |L(v)| \geq C\deg(v)\frac{\omega(v)\ln(\ln(\deg(v)))}{\ln(\deg(v))},
  \end{equation*}
  and $\deg(v) \geq \ln^2(\Delta)$, then $G$ is $(L, M)$-colorable.
\end{corollary}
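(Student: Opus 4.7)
The plan is to derive Corollary~\ref{local Kr-free} as a direct consequence of Theorem~\ref{mixed local thm}, whose statement is tailored precisely to imply such corollaries. I expect the derivation to be essentially algebraic, with no substantive mathematical obstacle beyond invoking Theorem~\ref{mixed local thm} itself.

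The key observation is that the function
\[
f(v) = 72 \cdot \min\left\{\sqrt{\tfrac{\ln(\omega(v))}{\ln(\deg(v))}},\ \tfrac{\omega(v)\ln(\ln(\deg(v)))}{\ln(\deg(v))},\ \tfrac{\log_2(\chi(v) + 1)}{\ln(\deg(v))}\right\}
\]
of Theorem~\ref{mixed local thm} is a minimum of three expressions, one of which is precisely the ratio appearing in Corollary~\ref{local Kr-free}. Since a minimum is bounded by each of its arguments, we have $f(v) \leq 72 \cdot \frac{\omega(v)\ln(\ln(\deg(v)))}{\ln(\deg(v))}$. Setting $C = 72$, the hypothesis $|L(v)| \geq C\deg(v)\cdot\frac{\omega(v)\ln(\ln(\deg(v)))}{\ln(\deg(v))}$ of Corollary~\ref{local Kr-free} therefore implies $|L(v)| \geq \deg(v)\cdot f(v)$. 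Together with $\deg(v) \geq \ln^2(\Delta)$, this is exactly the hypothesis of Theorem~\ref{mixed local thm} (for $\Delta$ sufficiently large), which produces the desired $(L,M)$-coloring.

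The only loose end is to absorb the ``sufficiently large $\Delta$'' condition of Theorem~\ref{mixed local thm} into the constant $C$. If $\Delta$ is bounded above by the threshold $\Delta_0$ from that theorem, then $\deg(v) \leq \Delta_0$ and $\omega(v) \leq \Delta_0 + 1$ are both bounded. Whenever $\deg(v)$ is large enough that the ratio $\frac{\omega(v)\ln(\ln(\deg(v)))}{\ln(\deg(v))}$ is well-defined and positive, it is bounded below by a positive constant depending only on $\Delta_0$; enlarging $C$ accordingly forces $|L(v)| > \deg(v)$. Small-degree degeneracies (where $\ln(\deg(v))$ is $0$ or negative) are handled by further inflating $C$ so that in any regime not covered by Theorem~\ref{mixed local thm} we have $|L(v)| > \deg(v)$. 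A standard greedy argument, which remains valid for correspondence coloring because each neighbor of $v$ excludes at most one color from $L(v)$ via the matching $M$, then finishes the proof in these bounded cases. All substantive work is thus deferred to Theorem~\ref{mixed local thm}.
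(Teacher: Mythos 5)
Your main derivation is exactly the intended argument and matches the paper's implicit approach: the paper states Corollary~\ref{local Kr-free} without proof, treating it as an immediate consequence of Theorem~\ref{mixed local thm}, and your observation that $f(v) \le 72\cdot\frac{\omega(v)\ln(\ln(\deg(v)))}{\ln(\deg(v))}$ (since $f$ is a minimum of three terms, one of which is this ratio) makes the hypothesis of Theorem~\ref{mixed local thm} follow directly from the hypothesis of the corollary with $C=72$.

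The attempt to absorb the ``sufficiently large $\Delta$'' restriction into the constant $C$, however, has a genuine gap and in fact cannot be repaired. You correctly note that when $\deg(v) \le \Delta_0$ is bounded and the ratio $\frac{\omega(v)\ln(\ln(\deg(v)))}{\ln(\deg(v))}$ is positive, inflating $C$ forces $|L(v)| > \deg(v)$; and you acknowledge a degenerate regime where $\ln(\deg(v)) \le 0$. But you miss the case $\deg(v) = 2$: there $\ln(\deg(v)) = \ln 2 > 0$ while $\ln(\ln(\deg(v))) < 0$, so the ratio is negative, the list-size hypothesis is vacuously satisfied for every $C > 0$, and enlarging $C$ only weakens it further. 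Concretely, a triangle with $\Delta = 2$, constant list $\{1\}$ on each vertex, and identity correspondence satisfies every hypothesis of the corollary (including $\deg(v) = 2 \ge \ln^2(2)$) but is not $(L,M)$-colorable. So the corollary is false as literally stated for small $\Delta$, and the ``for some constant $C$'' must be read with the theorem's implicit ``for $\Delta$ sufficiently large.'' Your first paragraph alone is the complete, correct proof; the small-$\Delta$ patching should be dropped.
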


We also derive the following ``local version'' of a result of Johansson \cite{J96-Kr} on graphs that are \textit{locally $r$-colorable}, meaning the neighborhood of every vertex is $r$-colorable.
\begin{corollary}
  For some constant $C$ the following holds.  If $G$ is a locally $r$-colorable graph of maximum degree at most $\Delta$ with correspondence assignment $(L, M)$ such that for each $v\in V(G)$,
  \begin{equation*}
    |L(v)| \geq C\deg(v)\frac{\log_2(r + 1)}{\ln(\deg(v))},
  \end{equation*}
  and $\deg(v) \geq \ln^2(\Delta)$, then $G$ is $(L, M)$-colorable.
\end{corollary}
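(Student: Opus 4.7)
The plan is to derive this corollary directly from Theorem~\ref{mixed local thm}, using only the third term of the minimum that defines $f(v)$. The hypothesis that $G$ is locally $r$-colorable means precisely that the chromatic number of the neighborhood $N(v)$ is at most $r$ for every vertex $v$, so $\chi(v) \leq r$ in the notation of Theorem~\ref{mixed local thm}.

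First I would set $C := 72$. Fix a correspondence assignment $(L,M)$ of $G$ satisfying the hypotheses of the corollary, so that $\deg(v) \geq \ln^2(\Delta)$ and
\begin{equation*}
|L(v)| \geq 72\deg(v)\cdot \frac{\log_2(r+1)}{\ln(\deg(v))}
\end{equation*}
for every $v \in V(G)$. Since $\log_2$ is monotonically increasing and $\chi(v) \leq r$, we have $\log_2(\chi(v)+1) \leq \log_2(r+1)$, and hence
\begin{equation*}
\frac{\log_2(\chi(v)+1)}{\ln(\deg(v))} \leq \frac{\log_2(r+1)}{\ln(\deg(v))}.
\end{equation*}
Thus the third term in the minimum defining $f(v)$ in Theorem~\ref{mixed local thm} is at most $\log_2(r+1)/\ln(\deg(v))$, so the minimum itself is at most this quantity. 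Multiplying by $72\deg(v)$ shows that $72\deg(v)\cdot\log_2(r+1)/\ln(\deg(v)) \geq \deg(v)\cdot f(v)$, and therefore $|L(v)| \geq \deg(v)\cdot f(v)$ for every $v$.

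With both hypotheses of Theorem~\ref{mixed local thm} verified (the list-size bound and the minimum-degree condition $\deg(v) \geq \ln^2(\Delta)$), I would simply invoke that theorem to conclude that $G$ is $(L,M)$-colorable, which is the desired statement. There is essentially no obstacle in this derivation: the work is entirely absorbed into Theorem~\ref{mixed local thm}, and the role of the corollary is just to highlight the specialization in which the ``chromatic-number-of-neighborhood'' term of the local bound dominates. The choice of $C = 72$ is inherited directly from the constant in Theorem~\ref{mixed local thm}, and ``sufficiently large $\Delta$'' is absorbed into the statement via the hypothesis on $\deg(v)$ once one allows $C$ to depend on the threshold required by Theorem~\ref{mixed local thm}.
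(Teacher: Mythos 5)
Your derivation is exactly the paper's intended one: the corollary is stated as an immediate specialization of Theorem~\ref{mixed local thm}, using that local $r$-colorability gives $\chi(v)\leq r$ so the third term of the minimum is at most $\log_2(r+1)/\ln(\deg(v))$. The only point worth making explicit is the small-$\Delta$ regime (the theorem needs $\Delta$ sufficiently large, the corollary does not), which you correctly handle by letting $C$ be large enough that for bounded $\Delta$ the lists exceed $\deg(v)$ and a greedy coloring suffices.
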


Of course, Theorem~\ref{mixed local thm} also implies a ``local version'' of Theorem~\ref{low omega thm}, as follows.
\begin{corollary}\label{local low omega}
  For some constant $C$ the following holds.  If $G$ is a graph of maximum degree at most $\Delta$ with correspondence assignment $(L, M)$ such that for each $v\in V(G)$,
  \begin{equation*}
    |L(v)| \geq C\deg(v)\sqrt{\frac{\ln(\omega(v))}{\ln(\deg(v))}},
  \end{equation*}
  and $\deg(v) \geq \ln^2(\Delta)$, then $G$ is $(L, M)$-colorable.
\end{corollary}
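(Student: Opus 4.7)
The plan is to derive Corollary~\ref{local low omega} as an immediate consequence of Theorem~\ref{mixed local thm}. The function $f(v)$ appearing in that theorem is defined as $72$ times a minimum of three expressions, so in particular $f(v)$ is bounded above by $72\sqrt{\ln(\omega(v))/\ln(\deg(v))}$. Therefore, taking the absolute constant $C$ in the corollary to be $C = 72$ (or anything larger), the hypothesis $|L(v)| \geq C\deg(v)\sqrt{\ln(\omega(v))/\ln(\deg(v))}$ directly implies $|L(v)| \geq \deg(v)\cdot f(v)$. The other hypothesis, $\deg(v) \geq \ln^2(\Delta)$, is identical in both statements, so Theorem~\ref{mixed local thm} applies verbatim and delivers an $(L,M)$-coloring of $G$.

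The only point that needs brief verification is that the quantities inside the square root are well-defined and positive. For $\Delta$ sufficiently large, $\ln^2(\Delta) \geq 2$, so each vertex $v$ has $\deg(v) \geq 2$; hence $\ln(\deg(v)) > 0$, and $v$ together with any of its neighbors forms a clique of size at least $2$, so $\omega(v) \geq 2$ and $\ln(\omega(v)) \geq \ln 2 > 0$. With these positivity checks, the ratio under the square root is a well-defined positive real number, and the deduction goes through.

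In short, the proof consists of a single application of Theorem~\ref{mixed local thm} after observing that a minimum is bounded by any of its arguments; there is no substantive obstacle, since all of the mathematical content of the corollary is already contained in that theorem. Corollary~\ref{local low omega} is simply the cleanest one-parameter specialization of Theorem~\ref{mixed local thm}, obtained by discarding the two sharper bounds (the $\omega(v)$-linear bound and the $\chi(v)$ bound) from the minimum.
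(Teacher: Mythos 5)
Your proposal is correct and is exactly the paper's (implicit) argument: the corollary is presented as an immediate specialization of Theorem~\ref{mixed local thm}, obtained by bounding the minimum defining $f(v)$ by its first term $\sqrt{\ln(\omega(v))/\ln(\deg(v))}$ and taking $C=72$. Like the paper, you read the corollary under the theorem's standing hypothesis that $\Delta$ is sufficiently large; if one wants the statement for all $\Delta$, the finitely many small values of $\Delta$ are absorbed by enlarging $C$ so that $C\deg(v)\sqrt{\ln(\omega(v))/\ln(\deg(v))}\geq\deg(v)+1$ (possible since $\omega(v)\geq 2$ and $\ln(\deg(v))$ is bounded in that range), whereupon greedy coloring suffices.
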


We now argue that Theorem~\ref{low omega thm} follows from Corollary~\ref{local low omega}.
\begin{proof}[Proof of Theorem~\ref{low omega thm} assuming Corollary~\ref{local low omega}]
  Let $G$ be a graph of maximum degree at most $\Delta$ with no clique of size greater than $\omega$.  We may assume that $G$ has minimum degree at least one.  If $G$ has minimum degree at least $\ln^2(\Delta)$, then Corollary~\ref{local low omega} implies $\listchi(G) \leq C\Delta\sqrt{\frac{\ln(\omega)}{\ln(\Delta)}}$, as desired.  Otherwise, we use the following standard procedure to obtain a graph of larger minimum degree containing $G$ as a subgraph.  We duplicate the graph $G$, and we add an edge between each vertex of minimum degree and its duplicate.  Note that the minimum degree is increased by one, and that for every vertex $v$, the size of a largest clique containing $v$ in the new graph does not increase.  We repeat this procedure until we obtain a graph $G'$, having $G$ as a subgraph, and with minimum degree at least $\ln^2(\Delta)$. The result now follows by applying Corollary~\ref{local low omega} to $G'$. 
\end{proof}

Although we can not match the leading constant in Theorem~\ref{tri-free bound} in our ``local version,'' we can get the leading constant within a factor of $4\ln(2)$, as follows.
\begin{theorem}\label{local tri-free best constant}
  For every $\xi > 0$, if $\Delta$ is sufficiently large and $G$ is a graph of maximum degree at most $\Delta$ with correspondence assignment $(L, M)$ such that for each $v\in V(G)$,
  \begin{equation*}
    |L(v)| \geq (4 + \xi)\frac{\deg(v)}{\log_2(\deg(v))}
  \end{equation*}
  and $\deg(v) \geq \ln^2(\Delta)$, then $G$ is $(L, M)$-colorable.
\end{theorem}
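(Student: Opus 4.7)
The plan is to refine the proof of Theorem~\ref{mixed local thm}, specializing to the triangle-free setting, in order to replace the constant $72$ that the $\log_2(\chi(v)+1)/\ln(\deg(v))$ branch of that theorem already delivers (since $\chi(v) \leq 1$ when $G$ is triangle-free) by the essentially tight constant $(4+\xi)\ln(2)$. Note that $(4+\xi)\deg(v)/\log_2(\deg(v)) = (4+\xi)\ln(2)\cdot\deg(v)/\ln(\deg(v))$, so the target matches Molloy's $(1+o(1))\Delta/\ln(\Delta)$ bound (Theorem~\ref{tri-free bound}) up to a factor of $4\ln(2)$; this overhead is the price one pays for working with correspondence coloring in the local formulation.

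I would adopt the iterative wasteful-coloring/nibble framework underlying Theorem~\ref{mixed local thm}. In each round, every uncolored vertex $v$ activates a uniformly random color from its current correspondence list with some probability $p$, and equalizing coin flips reject additional list entries so that in expectation $|L(v)|$ and the uncolored degree of $v$ shrink in a controlled ratio. The key feature to exploit is that when $G$ is triangle-free the neighborhood of every $v$ is independent, so two activated neighbors of $v$ never conflict through the correspondence matchings. This is exactly the property Molloy uses in Theorem~\ref{tri-free bound} to reach the $1+o(1)$ constant, and it transports intact into the DP/local setting round by round.

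Quantitatively, I would tune $p$ on the scale $\ln(\deg(v))/\deg(v)$ so that after a single round, the probability a given color survives in $L(v)$ is of order $\deg(v)^{-1/(4+\xi)}$, while the expected number of uncolored neighbors drops by a factor of $1 - \Theta(p)$; these combine to force the ratio $|L(v)|/\deg(v)$ to grow by a multiplicative factor bounded away from $1$ each round. After $O(\log\log\deg(v))$ rounds, the residual lists exceed the residual degrees and the remaining graph is finished off by a correspondence analogue of the Erd\H os--Rubin--Taylor degree-choosability argument. The invariants are maintained round by round via Talagrand concentration coupled with the Lov\'asz Local Lemma, the hypothesis $\deg(v) \geq \ln^2(\Delta)$ ensuring that bad events are super-polynomially rare relative to the local dependency neighborhoods.

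The main obstacle is tracking constants sharply enough to reach $4\ln(2)$ rather than the generic $72$ from Theorem~\ref{mixed local thm}: the activation probability, the equalizing constants, and the Talagrand deviation parameters must be jointly optimized so that all slack collapses into the $\xi$. Unlike the $\min$-bound of Theorem~\ref{mixed local thm}, where safety margins are built in across three competing regimes, here the triangle-free hypothesis lets one concentrate all available slack in the single regime corresponding to $\chi(v) \leq 1$; the real work lies precisely in verifying that no step of the nibble wastes more than an $o(1)$ factor in the leading constant.
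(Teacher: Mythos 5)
Your plan never actually establishes the one thing this theorem is about: the leading constant. As you note yourself, Theorem~\ref{mixed local thm} already gives the triangle-free local bound with constant $72$ (via $\chi(v)\leq 1$), so the entire content of Theorem~\ref{local tri-free best constant} is getting down to $4+\xi$ in front of $\deg(v)/\log_2(\deg(v))$. In your sketch every quantitatively decisive step is asserted rather than proved: that one round of the nibble leaves each color alive with probability of order $\deg(v)^{-1/(4+\xi)}$, that the ratio $|L(v)|/\deg(v)$ improves by a factor bounded away from $1$ per round while the \emph{total} multiplicative loss over all $\Theta(\log\log\deg(v))$ rounds collapses to exactly $(4+\xi)\ln 2$, and that Talagrand-plus-LLL invariants survive the compounding relative errors when degrees (hence list sizes) may be as small as $\ln^2(\Delta)$ while every bad event must still beat a dependency degree that is polynomial in $\Delta$. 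This last point is not cosmetic: in an iterative scheme with local degrees near $\ln^2(\Delta)$, per-round concentration with failure probability $o(\Delta^{-3})$ and error tolerances that telescope is precisely the hard bookkeeping, and none of it appears. Also, your explanation of the factor $4\ln 2$ as ``the price of correspondence coloring in the local formulation'' is not where it comes from.

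The paper's proof is not a nibble at all; it is a two-paragraph specialization of the one-shot Theorem~\ref{molloy blackbox}. One takes $\varepsilon=\xi/10$, $\listtarget(v)=\deg(v)^{(1+\varepsilon')/2}$ and $\listthreshold(v)=\deg(v)^{(1-2\varepsilon')/2}$; since $G$ is triangle-free, $G[N(v)]$ is edgeless, so any induced $H\subseteq G[N(v)]$ with $\numind(H)\geq\listthreshold(v)$ satisfies $\avgind(H)=\log_2(\numind(H))/2\geq\log_2(\listthreshold(v))/2$, giving $\indthreshold(v)\geq(1-2\varepsilon')\log_2(\deg(v))/4$, and the hypothesis $|L(v)|\geq(4+\xi)\deg(v)/\log_2(\deg(v))$ is exactly what makes $|L(v)|\geq(1+2\varepsilon)\deg(v)/\indthreshold(v)$; the condition $|L(v)|\geq 2\listthreshold(v)\listtarget(v)/(\varepsilon(\varepsilon-2\varepsilon^2))$ holds because $\listthreshold(v)\listtarget(v)=\deg(v)^{1-\varepsilon'/2}$, and the remaining conditions are checked as in Theorem~\ref{mixed local thm}. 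So the ``4'' has a concrete one-shot origin: one factor $2$ because the constraint $|L(v)|\gtrsim\listthreshold(v)\listtarget(v)$ caps $\listthreshold(v)$ at roughly $\deg(v)^{1/2}$, and another factor $2$ from $\avgind=\log_2(\numind)/2$ on edgeless graphs, with $\ln 2$ being only the change of base. If you want to pursue your route, you would essentially be redoing the Kim/Pettie--Su iterative analysis in the local correspondence setting, which is a much heavier project and would need all of the missing constant-tracking spelled out; as written, the proposal has a genuine gap exactly at the theorem's point.
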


\subsection{A More General Theorem}

As mentioned, Bernshteyn \cite{B17} proved that Theorems~\ref{tri-free bound} and \ref{Kr-free bound} hold for the correspondence chromatic number.  Many aspects of Bernshteyn's proofs are similar to those of Molloy's \cite{M17}; however, Bernshteyn's proof is much shorter and simpler.  Molloy used a proof technique known as ``entropy compression,'' which proves that a random algorithm terminates.  Bernshteyn cleverly realized that the use of entropy compression in Molloy's proof can be replaced with the Lopsided Lov\' asz Local Lemma, resulting in a substantial simplification of the proof.

Both proofs can be applied in the more general setting of graphs in which the average size of an independent set is somewhat large in comparison to the number of independent sets.  We make this precise by extracting a more general theorem from their proofs, and we actually prove a ``local version'' of it, as follows.

For a graph $H$, let $\avgind(H)$ and $\numind(H)$ denote the average size of an independent set and the number of independent sets in $H$ respectively.
\begin{theorem}\label{molloy blackbox}
  Let $G$ be a graph of maximum degree at most $\Delta$ with correspondence-assignment $(L, M)$, and $\varepsilon\in(0, 1/2)$.  Let $\listtarget, \listthreshold : V(G)\rightarrow\mathbb N$, and for each $v\in V(G)$, let $\indthreshold(v)$ be the minimum of $\avgind(H)$ taken over all induced subgraphs $H\subseteq G[N(v)]$ such that $\numind(H)\geq\listthreshold(v)$.
  If for each $v\in V(G)$,
  \begin{equation*}
    |L(v)| \geq \max\left\{(1 + 2\varepsilon)\frac{\deg(v)}{\indthreshold(v)}, \frac{2\listthreshold(v)\listtarget(v)}{\varepsilon(\varepsilon - 2\varepsilon^2)}\right\},
  \end{equation*}
  and
  \begin{enumerate}
  \item $\listtarget(v) \geq 18\ln(3\Delta)$,
  \item $\binom{d(v)}{\listtarget(v)}/\listtarget(v)! < \Delta^{-3}/8$,
  \end{enumerate}
  then $G$ is $(L, M)$-colorable.
\end{theorem}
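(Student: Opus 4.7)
The plan is to adapt the random sub-list sampling strategy of Molloy~\cite{M17} and its Lopsided Local Lemma streamlining by Bernshteyn~\cite{B17}. For each vertex $v$, independently sample a uniformly random subset $L'(v)\subseteq L(v)$ of size $\listtarget(v)$; we aim to show that with positive probability $G$ is $(L',M)$-colorable, which of course implies the stated $(L,M)$-colorability. The central random variable at each $v$ is the number of colors $c\in L'(v)$ that are \emph{unblocked}, meaning no neighbor $u$ of $v$ has the $M$-correspondence partner of $c$ lying in its own sample $L'(u)$.

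The geometric observation that introduces $\indthreshold(v)$ is that neighbors of $v$ forming an independent subgraph of $G[N(v)]$ can simultaneously hold partners of pairwise distinct colors at $v$, so an independent blocker set of size $k$ costs $v$ exactly $k$ colors rather than merging into one. Averaging over the random samples, the expected number of blocked colors in $L'(v)$ equals, up to the contribution of ``pathological'' subgraphs of $G[N(v)]$ with fewer than $\listthreshold(v)$ independent sets, approximately $\deg(v)\listtarget(v)/\bigl(|L(v)|\cdot\indthreshold(v)\bigr)$. Combining this with the first hypothesis $|L(v)|\ge(1+2\varepsilon)\deg(v)/\indthreshold(v)$ gives an expected unblocked count of at least $(2\varepsilon/(1+2\varepsilon))\listtarget(v)$; the second hypothesis $|L(v)|\ge 2\listthreshold(v)\listtarget(v)/\bigl(\varepsilon(\varepsilon-2\varepsilon^2)\bigr)$ is calibrated to absorb the pathological-subgraph contribution (bounded crudely via the number of sublists landing inside such a subgraph), leaving at least $\varepsilon\listtarget(v)$ unblocked colors in expectation.

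Concentration around this expectation will follow from Talagrand's inequality: Condition~(1), $\listtarget(v)\ge 18\ln(3\Delta)$, keeps the relative deviation small, and Condition~(2), $\binom{d(v)}{\listtarget(v)}/\listtarget(v)!<\Delta^{-3}/8$, controls the combinatorial factor arising when enumerating sublists to turn Talagrand's estimate into a usable tail bound. Together they yield $\Pr[B_v]\le\Delta^{-3}/8$, where $B_v$ is the event that strictly fewer than $\varepsilon\listtarget(v)/2$ colors of $L'(v)$ are unblocked. Since $B_v$ is determined by the random samples at vertices within graph distance two of $v$, each $B_v$ is mutually independent of all but at most $O(\Delta^4)$ other bad events, and the Lopsided Lov\'asz Local Lemma produces a sample with $\Pr\bigl[\bigcap_v\overline{B_v}\bigr]>0$. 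A sample avoiding all $B_v$ provides sublists with at least $\varepsilon\listtarget(v)/2$ unblocked colors per vertex, from which an actual $(L',M)$-coloring is extracted either by a Haxell-style matching argument on the induced auxiliary bipartite graph or by iterating the same partial-coloring procedure on the reduced instance until a greedy finish is possible.

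The main obstacle I expect is the expectation calculation in the second paragraph. Turning the qualitative ``independent blockers share one color'' observation into the sharp gain by a factor of $\indthreshold(v)$ requires writing the blocked-color count as a sum over colors of indicators for ``some neighbor has a partner in its sample,'' then rearranging to express the sum as an average of independent-set counts in a random induced subgraph of $G[N(v)]$, and only then invoking the definition of $\indthreshold(v)$ as a minimum over subgraphs with at least $\listthreshold(v)$ independent sets. The restriction in that definition forces a separate, cruder treatment of the ``few independent sets'' regime, which is precisely what the second list-size hypothesis is engineered to absorb; matching these two bounds so that the overall expected unblocked count lands on a clean $(1-\varepsilon)$-fraction of $\listtarget(v)$ is the main quantitative challenge.
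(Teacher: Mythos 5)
There is a genuine gap, and it is exactly the step you flag as the ``main obstacle'': in your model the sublists $L'(u)$ are sampled independently across vertices, so whether a color $c\in L'(v)$ is blocked by a neighbor $u$ depends only on whether the partner of $c$ lands in $L'(u)$, and these events are independent over $u\in N(v)$ and completely insensitive to the adjacency structure inside $N(v)$. Consequently no quantity measuring independent sets of $G[N(v)]$ --- in particular $\indthreshold(v)$ --- can enter the expected blocked count; the survival probability of a color in your model is $\prod_{u\in N(v)}\bigl(1-\listtarget(u)/|L(u)|\bigr)$, and at the parameters this theorem is built for (e.g.\ $\listtarget(v)\approx\deg(v)^{5/8}$ while $|L(v)|$ is only $\deg(v)$ divided by a slowly growing factor, as in the proof of Theorem~\ref{mixed local thm}) this is $\exp(-\deg(v)^{\Omega(1)})$, so essentially every color of $L'(v)$ is blocked in expectation. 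Your claimed estimate $\deg(v)\listtarget(v)/\bigl(|L(v)|\,\indthreshold(v)\bigr)$ has no derivation in this model, and your heuristic that ``an independent blocker set of size $k$ costs $v$ exactly $k$ colors rather than merging into one'' is the reverse of the mechanism that makes $\indthreshold$ useful: the gain comes from many neighbors receiving the \emph{same} color (hence forming an independent set), which costs $v$ only one color.

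The paper obtains this gain by sampling a uniformly random proper \emph{partial} $(L,M)$-coloring of $N(v)$, conditioned on an arbitrary partial coloring of $G-N[v]$. Conditioning further on everything except which neighbors take (partners of) $c$, the set of neighbors colored with partners of $c$ is uniform over the independent sets of an induced subgraph $H\subseteq G[N(v)]$, so $\Expect{\appear_c}=\avgind(H)$ and $\Prob{\appear_c=0}=1/\numind(H)$. The dichotomy is then: colors for which $\numind(H)\leq\listthreshold(v)$ with probability at least $\varepsilon$ survive with probability at least $\varepsilon/\listthreshold(v)$; for the others $\Expect{\appear_c}\geq(1-\varepsilon)\indthreshold(v)$, and since $\sum_c\Expect{\appear_c}\leq\deg(v)$, the hypothesis $|L(v)|\geq(1+2\varepsilon)\deg(v)/\indthreshold(v)$ shows only a $\tfrac{1}{(1-\varepsilon)(1+2\varepsilon)}$ fraction of $L(v)$ is of that type. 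That is how $\indthreshold$ enters, and it is unavailable to independent sublist sampling. Two further mismatches: condition (2) is not a Talagrand bookkeeping factor --- the paper uses plain Chernoff for negatively correlated indicators, and condition (2) is the union bound ($\binom{\deg(v)}{\listtarget(v)}$ sets, each all-uncolored with probability at most $1/\listtarget(v)!$) controlling the event that $v$ has too many uncolored neighbors $u$ with $\listtarget(u)\geq\listtarget(v)$; that event is what allows the partial coloring to be finished greedily in decreasing order of $\listtarget$, a completion step your plan leaves vague (though with your notion of ``unblocked'' no completion would even be needed --- the real failure is that unblocked colors essentially never exist at these list sizes).
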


We think that proving Theorem~\ref{molloy blackbox} separately makes the proof easier to understand, and we think that Theorem~\ref{molloy blackbox} may have applications not listed in this paper.

\subsection{Outline of the Paper}
In Section~\ref{mixed local section}, we prove Theorems~\ref{mixed local thm} and~\ref{local tri-free best constant} using Theorem~\ref{molloy blackbox}.  In order to apply Theorem~\ref{molloy blackbox}, one needs to find a lower bound on $\indthreshold(v)$.  We do this by proving a general bound on $\avgind(H)$ for a graph $H$ in terms of $\numind(H)$ and $\omega(H)$.  For large values of $\omega(H)$, our bound is better than the bound used by Molloy \cite{M17}, and this yields the improvement in Theorem~\ref{low omega thm}.  The condition that $|L(v)| \geq (1 + 2\varepsilon)\frac{\deg(v)}{\indthreshold(v)}$ in Theorem~\ref{mixed local thm} results in the bound $|L(v)| \geq \deg(v)\cdot f(v)$ in Theorem~\ref{mixed local thm}.  The condition that $|L(v)| \geq \frac{2\listthreshold(v)\listtarget(v)}{\varepsilon(\varepsilon - 2\varepsilon^2)}$ restricts the choice of functions $\listtarget$ and $\listthreshold$.

In Section~\ref{blackbox proof section}, we prove Theorem~\ref{molloy blackbox}.  The proof is similar to Bernshteyn's proof of Theorem~\ref{Kr-free bound} from \cite{B17}; however, we prove the more general theorem, and some changes are necessary in order to prove the ``local version'' of it.

In Section~\ref{prelim section}, we formally define correspondence coloring.  We also discuss some notation about ``partial colorings'' and some probabilistic tools that are needed in the proof of Theorem~\ref{molloy blackbox}.


\section{Preliminaries}
\label{prelim section}

\subsection{Correspondence Coloring}
In this subsection we define correspondence coloring.

\begin{definition}\label{correspondence coloring}
  Let $G$ be a graph with list-assignment $L$.

  \begin{itemize}
  \item If $M$ is a function defined on $E(G)$ where for each $e=uv\in E(G)$, $M_e$ is a matching of $\{u\}\times L(u)$ and $\{v\}\times L(v)$, we say $(L, M)$ is a \textit{correspondence-assignment} for $G$.
	
  \item An {\em $(L, M)$-coloring} of $G$ is a function $\varphi:V(G)\rightarrow\mathbb N$ such that $\varphi(u)\in L(u)$ for every $u\in V(G)$, and for every $e=uv\in E(G)$, $(u, \varphi(u))(v, \varphi(v))\notin M_e$.  If $G$ has an $(L, M)$-coloring, then we say $G$ is {\em $(L, M)$-colorable}.
  \end{itemize}
  A correspondence-assignment $(L, M)$ is a $\textit{$k$-correspondence-assignment}$ if $L$ is a $k$-list-assignment, and the \textit{correspondence chromatic number} of $G$, denoted $\corrchi(G)$, is the minimum $k$ such that for every $k$-correspondence-assignemnt $(L, M)$, $G$ is $(L, M)$-colorable.
\end{definition}
For convenience, if $uv\in E(G)$, $c_1\in L(u)$, $c_2\in L(v)$, and $(u, c_1)(v, c_2)\in M_{uv}$, we will just say $c_1c_2\in M_{uv}$.  We will also say $c_1$ \textit{corresponds} to $c_2$.  Note that if for each $e=uv\in E(G)$ and $c\in L(u)\cap L(v)$, $cc\in M_{uv}$, then an $(L, M)$-coloring is an $L$-coloring.  Hence, for every graph $G$, $\listchi(G)\leq\corrchi(G)$.

\subsection{Partial Colorings}

The proof of Theorem~\ref{molloy blackbox} relies on analyzing a ``partial coloring'' of the graph chosen uniformly at random.  In this subsection, we define some notation that will be useful for this analysis.

\begin{definition}
  Let $G$ be a graph with correspondence-assignment $(L, M)$, and let $\blank$ be a color not in $L(v)$ for any $v\in V(G)$.  A \textit{partial $(L, M)$-coloring} of $G$ is a mapping $\partialcol$ with domain $V(G)$ such that for each $v\in V(G)$, $\partialcol(v)\in L(v)\cup\{\blank\}$.  If $\partialcol(v) = \blank$, we say $v$ is \textit{$\partialcol$-uncolored}.  For each $\partialcol$-uncolored vertex $v$, we let $L_\partialcol(v)$ denote the set of colors $c\in L(v)$ such that for every neighbor $u$ of $v$, $\partialcol(u)$ does not correspond to $c$, and we let $M_\partialcol$ denote the restriction of $M$ to edges between $\partialcol$-uncolored vertices.  If $\partialcol'$ is a partial $(L_\partialcol, M_\partialcol)$-coloring of the $\partialcol$-uncolored vertices of $G$, we let
  \begin{equation*}
    (\partialcol + \partialcol')(v) = \left\{
      \begin{array}{l l}
        \partialcol(v) & \text{if } \partialcol(v) \neq \blank,\\
        \partialcol'(v) & \text{otherwise}.
      \end{array}\right.
  \end{equation*}
\end{definition}

We will show that with nonzero probability the random partial coloring can be extended to a coloring of the whole graph.  Using the following proposition, it will suffice to show that if $\partialcol$ is a partial coloring of $G$ chosen uniformly at random, then the $\partialcol$-uncolored vertices induce a subgraph that is $(L_\partialcol, M_\partialcol)$-colorable.

\begin{proposition}\label{combining colorings prop}
  If $G$ is a graph with correspondence-assignment $(L, M)$, $\partialcol$ is a partial $(L, M)$ coloring of $G$, and $\partialcol'$ is a $(L_\partialcol, M_\partialcol)$-coloring of the graph induced by $G$ on the $\partialcol$-uncolored vertices, then $\partialcol + \partialcol'$ is an $(L, M)$-coloring of $G$.
\end{proposition}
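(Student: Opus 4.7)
The plan is to verify directly the two defining properties of an $(L, M)$-coloring for the map $\partialcol + \partialcol'$: first, that $(\partialcol + \partialcol')(v) \in L(v)$ for every $v \in V(G)$, and second, that for every edge $uv \in E(G)$ the pair $(u, (\partialcol + \partialcol')(u))(v, (\partialcol + \partialcol')(v))$ does not lie in $M_{uv}$.

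The first property falls out of the case split built into the definition of $\partialcol + \partialcol'$. If $\partialcol(v) \neq \blank$, then $(\partialcol + \partialcol')(v) = \partialcol(v) \in L(v)$ by the definition of a partial $(L, M)$-coloring. Otherwise $(\partialcol + \partialcol')(v) = \partialcol'(v) \in L_\partialcol(v) \subseteq L(v)$, where the final containment is immediate from the definition of $L_\partialcol(v)$.

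For the second property I would split into three cases according to how many endpoints of $uv$ are $\partialcol$-uncolored. If neither endpoint is, then $\partialcol + \partialcol'$ agrees with $\partialcol$ on $\{u, v\}$, and the non-conflict follows because $\partialcol$ is itself a partial $(L, M)$-coloring, read as proper on its colored vertices. If exactly one endpoint, say $v$, is $\partialcol$-uncolored, then $(\partialcol + \partialcol')(v) = \partialcol'(v) \in L_\partialcol(v)$, and by the very definition of $L_\partialcol(v)$ no color in that set corresponds along $M_{uv}$ to $\partialcol(u) = (\partialcol + \partialcol')(u)$. Finally, if both endpoints are $\partialcol$-uncolored, then $uv$ is an edge of the induced subgraph on those vertices, and $(M_\partialcol)_{uv} = M_{uv}$; the non-conflict then follows because $\partialcol'$ is, by hypothesis, an $(L_\partialcol, M_\partialcol)$-coloring of that induced subgraph.

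The proof is pure bookkeeping and I anticipate no substantive obstacle. The one subtle point is the both-colored case: as literally written, the definition of partial $(L, M)$-coloring constrains only the pointwise values of $\partialcol$, not the matching relation on edges between colored vertices. The proposition clearly presumes the natural convention that $\partialcol$ already respects $M$ on its colored vertices, and in a careful write-up I would either make this explicit at the outset of the proof or fold it into the hypothesis before beginning the case analysis.
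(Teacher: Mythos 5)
Your verification is correct, and it is exactly the argument the paper intends: the paper states this proposition without proof, treating the three-case check you carry out as immediate from the definitions of $L_\partialcol$ and $M_\partialcol$. Your flagged subtlety is also the right reading of the paper — a partial $(L,M)$-coloring is implicitly required to have no conflicts between its colored vertices (this convention is in fact needed elsewhere, e.g.\ for Proposition~\ref{ind set conditional}, whose formula for $\Prob{\appear_c = 0}$ only holds when the random partial coloring is uniform over conflict-free partial colorings), so folding it into the hypothesis as you propose is appropriate.
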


\subsection{Probabilistic Tools}

We will need the following version of the Chernoff bounds for negatively correlated random variables.  
\begin{definition}
  We say that boolean random variables $X_1, \dots, X_m$ are \textit{negatively correlated} if for every $I \subseteq \{1, \dots, m\}$,
  \begin{equation*}
    \Prob{\wedge_{i\in I}X_i} \leq \prod_{i\in I}\Prob{X_i}.
  \end{equation*}
\end{definition}
\begin{lemma}[Chernoff Bounds]\label{chernoff bounds}
  Let $X_1, \dots, X_m$ be negatively correlated boolean random variables, and let $X = \sum_{i=1}^m X_i$.  Then for any $0 < t \leq \Expect{X}$,
  \begin{equation*}
    \Prob{|X - \Expect{X}| > t} \leq 2 \exp\left(-\frac{t^2}{3\Expect{X}}\right).
  \end{equation*}
\end{lemma}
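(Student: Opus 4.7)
The plan is to prove this by the classical exponential moment method (the Bernstein--Chernoff trick). The core observation is that the negative correlation hypothesis is precisely what is needed to bound the moment generating function of $X$ from above by the same quantity one obtains in the fully independent case; once that MGF bound is in hand, the argument proceeds exactly as in the classical Chernoff proof.

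First I would establish that for every $\lambda > 0$,
\[\Expect{e^{\lambda X}} \leq \exp\bigl((e^\lambda - 1)\Expect{X}\bigr).\]
Since each $X_i$ is boolean, $e^{\lambda X_i} = 1 + (e^\lambda - 1)X_i$, and expanding the product of these factors and taking expectations gives
\[\Expect{e^{\lambda X}} = \sum_{I \subseteq \{1,\ldots,m\}} (e^\lambda - 1)^{|I|}\,\Prob{\wedge_{i \in I} X_i}.\]
Because $(e^\lambda - 1)^{|I|} \geq 0$, the negative correlation hypothesis applies termwise to bound this sum by $\prod_{i=1}^m \bigl(1 + (e^\lambda - 1)\Expect{X_i}\bigr)$, and then $1 + x \leq e^x$ yields the claimed MGF estimate. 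Markov's inequality applied to $e^{\lambda X}$, combined with the optimal choice $\lambda = \ln(1 + t/\Expect{X})$, then reduces the upper tail to the elementary inequality $\phi(x) := (1+x)\ln(1+x) - x \geq x^2/3$ for $x \in (0,1]$ (immediate from the Taylor expansion $\phi(x) = x^2/2 - x^3/6 + \cdots$), giving the one-sided bound $\exp(-t^2/(3\Expect{X}))$.

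The main obstacle is the lower tail: substituting $-\lambda$ for $\lambda$ produces an expansion of $\Expect{e^{-\lambda X}}$ with alternating signs, so the hypothesis as stated does not apply termwise. I would handle this by working with the complementary variables $Y_i := 1 - X_i$. In the random processes to which the lemma is actually applied in Section~\ref{blackbox proof section} (namely the random partial colorings of Theorem~\ref{molloy blackbox}, where each $X_i$ is the indicator of some color-retention or color-loss event determined by independent coin flips assigned to vertices), the dual condition $\Prob{\wedge_{i \in I} Y_i} \leq \prod_{i \in I}\Prob{Y_i}$ holds for exactly the same reason the original one does. Granted this, the same MGF calculation applied to $\sum_i Y_i$ yields the matching lower-tail bound, and a union bound over the two one-sided estimates supplies the factor of $2$ in the statement.
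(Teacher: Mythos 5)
The paper never proves this lemma --- it is quoted as a standard tool --- so the only question is whether your argument establishes the statement as written, and it does not. Your upper-tail half is correct and standard: for boolean $X_i$ and $\lambda>0$ the identity $e^{\lambda X}=\prod_i\bigl(1+(e^\lambda-1)X_i\bigr)$ expands with nonnegative coefficients, so the hypothesis bounds $\Expect{e^{\lambda X}}$ termwise by the independent-case product, and Markov plus $(1+x)\ln(1+x)-x\ge x^2/3$ on $(0,1]$ (true, though not quite ``immediate'' from the alternating Taylor series without noting the terms decrease there) gives $\Prob{X\ge\Expect{X}+t}\le\exp\bigl(-t^2/(3\Expect{X})\bigr)$ for $0<t\le\Expect{X}$.

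The genuine gap is the lower tail, and you have named it yourself: the stated hypothesis only controls conjunctions of the events $\{X_i=1\}$, which gives nothing for $\Expect{e^{-\lambda X}}$, and your fix --- assume the complements are also negatively correlated and observe that this holds for the colour-occupancy indicators in Section~\ref{blackbox proof section} --- proves a different lemma from the one stated; you cannot ``grant'' the dual condition, since it is genuinely extra information. Indeed the lemma as literally stated is false. Take $m\ge 2s$ and let $(X_1,\dots,X_m)$ be the all-zero vector with probability $\tfrac1{2s}$ and otherwise the indicator vector of a uniformly random $s$-subset of $\{1,\dots,m\}$. Then $\Prob{X_i}=(1-\tfrac1{2s})\tfrac sm$, and for $|I|=k\le s$ one has $\Prob{\wedge_{i\in I}X_i}=(1-\tfrac1{2s})\prod_{a=0}^{k-1}\tfrac{s-a}{m-a}\le\prod_{i\in I}\Prob{X_i}$, because each ratio $\tfrac{1-a/s}{1-a/m}$ with $a\ge1$ is at most $\tfrac{1-1/s}{1-1/m}\le 1-\tfrac1{2s}$ when $m\ge 2s-1$; yet $\Expect{X}=s-\tfrac12$, and taking $t=s-1$ the left side of the conclusion is at least $\Prob{X=0}\ge\tfrac1{2s}$, which for large $s$ far exceeds $2\exp\bigl(-t^2/(3\Expect{X})\bigr)\le 2e^{-(s-2)/3}$. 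So the correct repair is exactly the one you gesture at, but it must go into the hypothesis rather than into an appeal to the application: either assume negative correlation of the $X_i$ and of their complements (negative association suffices --- and it does hold for the ``some neighbour of $v$ is coloured with a colour corresponding to $c$'' events, which are occupancy events of a balls-in-bins type), or state only the single tail actually used.
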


In our application of Lemma~\ref{chernoff bounds}, we will have a random partial $(L, M)$-coloring for some correspondence assignment $(L, M)$ and a boolean random variable for each color $c\in L(v)$ indicating if $v$ has a neighbor whose color corresponds to $c$.

We will also need the Lov\' asz Local Lemma.
\begin{lemma}[Lov\' asz Local Lemma]\label{local lemma}
  Let $I$ be a finite set, and for each $i\in I$, let $B_i$ be a random event.  Suppose that for every $i\in I$, there is a set $\Gamma(i)\subseteq I$ such that $|\Gamma(i)| \leq d$ and for all $Z\subseteq I\setminus \Gamma(i)$,
  \begin{equation*}
    \Prob{B_i | \bigcap_{j\in Z} \overline{B_j}} \leq p.
  \end{equation*}
  If $4pd \leq 1$, then with nonzero probability none of the events $B_i$ occur.
\end{lemma}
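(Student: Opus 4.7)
The plan is to prove, by induction on $|S|$, that
\[\Prob{B_i \,|\, \bigcap_{j \in S} \overline{B_j}} \leq 2p\]
holds for every $i \in I$ and every $S \subseteq I \setminus \{i\}$ whose conditioning event has positive probability. The base case $|S| = 0$ is immediate from the hypothesis applied with $Z = \emptyset$. Once this inequality is in hand, the conclusion follows from the chain rule: fixing any enumeration $I = \{1, \dots, n\}$,
\[\Prob{\bigcap_{i \in I} \overline{B_i}} = \prod_{i=1}^n \Prob{\overline{B_i} \,|\, \bigcap_{j < i} \overline{B_j}} \geq (1 - 2p)^n > 0,\]
since $4pd \leq 1$ with $d \geq 1$ gives $2p < 1$, and positivity of the intermediate conditioning events propagates along the same induction.

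For the inductive step, I would partition $S$ as $S_1 = S \cap \Gamma(i)$ and $S_2 = S \setminus \Gamma(i)$. If $S_1 = \emptyset$, then $S \subseteq I \setminus \Gamma(i)$ and the bound $p \leq 2p$ is immediate from the hypothesis with $Z = S$. Otherwise, write
\[\Prob{B_i \,|\, \bigcap_{j \in S} \overline{B_j}} = \frac{\Prob{B_i \cap \bigcap_{j \in S_1} \overline{B_j} \,|\, \bigcap_{j \in S_2} \overline{B_j}}}{\Prob{\bigcap_{j \in S_1} \overline{B_j} \,|\, \bigcap_{j \in S_2} \overline{B_j}}}.\]
I would bound the numerator above by $\Prob{B_i \,|\, \bigcap_{j \in S_2} \overline{B_j}} \leq p$, invoking the hypothesis with $Z = S_2 \subseteq I \setminus \Gamma(i)$. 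For the denominator, I would use a union bound together with the inductive hypothesis on each $B_j$ for $j \in S_1$ (valid because $|S_2| < |S|$), obtaining a lower bound of $1 - 2p|S_1| \geq 1 - 2pd \geq 1/2$. The ratio is therefore at most $2p$, completing the induction.

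The only real delicacy is bookkeeping. The partition must be chosen so that $S_2 \subseteq I \setminus \Gamma(i)$, so that the lemma's hypothesis applies directly to the numerator; and the inductive hypothesis must remain applicable to each $B_j$ appearing in the denominator estimate, which is guaranteed because $|S_2| < |S|$ whenever $S_1 \neq \emptyset$. The constant $4pd \leq 1$ is used exactly once, at the step $1 - 2pd \geq 1/2$, and no tool beyond the union bound is required. Positivity of the various conditioning events along the chain rule is itself a routine induction on $|S|$, since each factor $1 - 2p$ is strictly positive. Beyond this, there is no substantive obstacle.
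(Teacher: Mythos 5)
The paper states this lemma as a standard preliminary tool (it is the Lopsided Lov\'asz Local Lemma) and does not include a proof of its own, so there is no paper argument to compare against.

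Your argument is the standard inductive proof and is correct. You establish by induction on $|S|$ that $\Prob{B_i \mid \bigcap_{j\in S}\overline{B_j}}\leq 2p$ for all $S\subseteq I\setminus\{i\}$, splitting $S$ into $S_1=S\cap\Gamma(i)$ and $S_2=S\setminus\Gamma(i)$, bounding the numerator by the lopsidedness hypothesis (valid since $S_2\subseteq I\setminus\Gamma(i)$) and the denominator by a union bound plus the inductive hypothesis applied to the strictly smaller conditioning set $S_2$; the bound $1-2p|S_1|\geq 1-2pd\geq 1/2$ uses $4pd\leq 1$ exactly once. Conditioning events stay positive because each chain-rule factor is at least $1-2p>0$ once $d\geq 1$ is observed. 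Two small notational remarks worth making explicit if this were to appear in print: (i) when invoking the inductive hypothesis on $B_j$ for $j\in S_1$ you also need $j\notin S_2$, which holds because $S_1$ and $S_2$ partition $S$; and (ii) the statement allows $i\in\Gamma(i)$ (as in the paper's application, where $\Gamma(v)$ includes $v$ itself), but since $i\notin S$ the proof is unaffected. The degenerate case $d=0$ (where $4pd\leq 1$ places no constraint on $p$) is, as you note, implicitly excluded, as is standard.
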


\section{Proof of Theorem \ref{molloy blackbox}}
\label{blackbox proof section}

In this section, we prove Theorem~\ref{molloy blackbox}.  We assume $G, (L, M), \Delta, \listtarget$, and $\listthreshold$ satisfy the conditions of Theorem~\ref{molloy blackbox} throughout the section.

\subsection{Completing a partial coloring}

We prove Theorem~\ref{molloy blackbox} by finding a partial $(L, M)$-coloring of $G$ that we can greedily extend to an $(L, M$)-coloring.  The following lemma provides the existence of such a partial $(L, M)$-coloring.
\begin{lemma}\label{partial coloring lemma}
  There exists a partial $(L, M)$-coloring $\partialcol$ of $G$ such that for every $\partialcol$-uncolored vertex $v$,
  \begin{enumerate}
  \item $|L_\partialcol(v)| \geq \listtarget(v)$, and
  \item $v$ has fewer than $\listtarget(v)$ $\partialcol$-uncolored neighbors $u$ such that $\listtarget(u) \geq \listtarget(v)$.
  \end{enumerate}
\end{lemma}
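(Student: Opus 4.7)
The plan is to construct a random partial $(L,M)$-coloring $\partialcol$ of $G$ and use the Lov\'asz Local Lemma (Lemma~\ref{local lemma}) to argue that, with positive probability, both conditions (1) and (2) hold at every vertex. Following the ``wasteful coloring'' template of Molloy~\cite{M17} and Bernshteyn~\cite{B17}, I would, independently for each $v \in V(G)$, pick $\phi(v) \in L(v)$ uniformly at random and activate $v$ with probability $\varepsilon$; then set $\partialcol(v) = \phi(v)$ if $v$ is activated and no activated neighbor $u$ has $\phi(u)$ in correspondence with $\phi(v)$ under $M_{uv}$, and $\partialcol(v) = \blank$ otherwise. This automatically yields a proper partial $(L,M)$-coloring. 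For each $v$ let $A_v$ be the bad event that condition (1) or (2) fails at $v$; since $A_v$ depends only on the random choices at vertices within distance two of $v$ (of which there are at most $\Delta^3$), Lemma~\ref{local lemma} will close the argument once I show $\Pr[A_v] \leq \Delta^{-3}/4$.

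To bound the probability that (1) fails, I would write $|L_\partialcol(v)| = |L(v)| - \sum_{c \in L(v)} Y_c$, where $Y_c$ indicates that $c$ is blocked at $v$; the $Y_c$ are negatively correlated Bernoullis, so once $\mathbb{E}\bigl[\sum_c Y_c\bigr]$ is under control, Lemma~\ref{chernoff bounds} together with $\listtarget(v) \geq 18\ln(3\Delta)$ delivers concentration yielding $\Pr[|L_\partialcol(v)| < \listtarget(v)] \leq \Delta^{-3}/8$. Getting a sharp enough expectation bound is the main obstacle. A crude union bound yields only $\mathbb{E}\bigl[\sum_c Y_c\bigr] \leq \varepsilon \deg(v)$, which is too weak; the refinement, in the spirit of Molloy, is that activated neighbors of $v$ that lie in an independent set of $G[N(v)]$ and whose colors all match under the matchings $M_{uv}$ to a common color at $v$ collectively block only one color at $v$, saving a factor equal to the average independent set size in an appropriate subgraph. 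Taking $H \subseteq G[N(v)]$ to attain $\indthreshold(v)$ with $\numind(H) \geq \listthreshold(v)$, the first prong $|L(v)| \geq (1+2\varepsilon)\deg(v)/\indthreshold(v)$ of the hypothesis makes the main term large, while the second prong $|L(v)| \geq 2\listthreshold(v)\listtarget(v)/(\varepsilon(\varepsilon - 2\varepsilon^2))$ absorbs the error stemming from colors not controlled by $H$, giving $\mathbb{E}[|L_\partialcol(v)|] \geq (1 + \varepsilon)\listtarget(v)$.

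For the probability that (2) fails, I would union-bound over all $\binom{\deg(v)}{\listtarget(v)}$ subsets $S \subseteq \{u \in N(v) : \listtarget(u) \geq \listtarget(v)\}$ of size $\listtarget(v)$. The joint event that every $u \in S$ is $\partialcol$-uncolored forces tightly coupled behaviour of the random choices inside $S$, and a symmetry-over-orderings argument bounds its probability by $1/\listtarget(v)!$; combining with the third hypothesis $\binom{\deg(v)}{\listtarget(v)}/\listtarget(v)! < \Delta^{-3}/8$ gives $\Pr[(2)\text{ fails}] \leq \Delta^{-3}/8$. Summing the two contributions yields $\Pr[A_v] \leq \Delta^{-3}/4$, and since $4 \cdot (\Delta^{-3}/4) \cdot \Delta^3 = 1$, Lemma~\ref{local lemma} delivers with positive probability a partial coloring $\partialcol$ satisfying both conditions at every vertex, completing the proof.
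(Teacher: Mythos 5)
Your proposal diverges from the paper in the random model it analyzes, and this is where it breaks. The paper takes $\partialcol$ to be a partial $(L,M)$-coloring of $G$ chosen uniformly at random and proves Lemma~\ref{probability lemma} by conditioning on an arbitrary fixed coloring $\subpartialcol$ of $G-N[v]$; the two engines of that proof --- Proposition~\ref{ind set conditional}, which says that conditioned on $\nbrhoodpartialcol^c$ the neighbors receiving a color corresponding to $c$ behave like a uniformly random independent set of $G[\colorable(c,\nbrhoodpartialcol')]$ (so $\Prob{\appear_c=0\mid\cdot}=\numind(\cdot)^{-1}$ and $\Expect{\appear_c\mid\cdot}=\avgind(\cdot)$), and the counting bound that any $\listtarget(v)$ neighbors each retaining at least $\listtarget(v)$ available colors are simultaneously uncolored with probability at most $1/\listtarget(v)!$ --- are properties of this uniform measure. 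Your activation scheme (activate each vertex independently with probability $\varepsilon$, keep the color unless an activated neighbor conflicts) has neither property, and your treatment of condition (2) fails outright in it: every vertex is unactivated, hence uncolored, with probability $1-\varepsilon$, so a fixed set $S$ of $\listtarget(v)$ neighbors is entirely uncolored with probability at least $(1-\varepsilon)^{\listtarget(v)}$, which dwarfs $1/\listtarget(v)!$, and indeed a typical vertex ends up with about $(1-\varepsilon)\deg(v)\gg\listtarget(v)$ uncolored neighbors. So condition (2) is violated almost everywhere after a single round; making an activation model work would require iterating it (a Johansson-style nibble), which is a different and substantially harder argument than the one you sketch.

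Even apart from the model, the step you yourself identify as ``the main obstacle'' --- the lower bound on $\Expect{|L_\partialcol(v)|}$ --- is asserted rather than proved: the paper gets it (Lemma~\ref{expectation concentration lemma}) by splitting $L(v)$ into colors $c$ for which $\numind(\randomcolorable(c))\leq\listthreshold(v)$ with probability at least $\varepsilon$ and the rest, using $\Prob{\appear_c=0\mid\cdot}\geq\varepsilon/\listthreshold(v)$ on the first part and $\Expect{\appear_c}\geq(1-\varepsilon)\indthreshold(v)$ together with $\sum_c\Expect{\appear_c}\leq\deg(v)$ to bound the size of the second; nothing in your sketch supplies a substitute for these identities under a product measure. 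Two further issues: the bad event for (2) must include the requirement $|L_\partialcol(u)|\geq\listtarget(u)$ for the uncolored neighbors (this is exactly what makes the $\listtarget(v)!$ count valid; the lemma is then recovered by combining $\overline{B_v}$ with $\overline{A_u}$ for the neighbors $u$, as the paper does), and your Local Lemma bookkeeping conflates the number of vertices in a distance-two ball with the dependency degree: if each event is determined by the randomness in a distance-two ball, events at vertices up to distance four apart interact, giving degree of order $\Delta^4$, not $\Delta^3$; the paper gets $\Delta^3$ only by invoking the conditional form of Lemma~\ref{local lemma} together with the conditioning on $G-N[v]$ built into Lemma~\ref{probability lemma}.
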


Lemma~\ref{partial coloring lemma} generalizes Lemma~4.1 in the proof of Bernshteyn \cite{B17}, and the partial $(L, M)$-coloring in Lemma~\ref{partial coloring lemma} generalizes the ``flaw-free'' coloring output by the random algorithm of Molloy \cite{M17}.  When the function $\ell$ is not constant, our second condition is slightly weaker, so we are not necessarily able to complete the partial coloring greedily in any order as in their proofs.  
\begin{proof}[Proof of Theorem~\ref{molloy blackbox} assuming Lemma~\ref{partial coloring lemma}]
  Let $\partialcol$ be the partial $(L, M)$-coloring of $G$ satisfying (1) and (2) of Lemma~\ref{partial coloring lemma}.  By Proposition~\ref{combining colorings prop}, it suffices to show that the $\partialcol$-uncolored vertices induce a graph that is $(L_\partialcol, M_\partialcol)$-colorable.  This follows by ordering the $\partialcol$-uncolored vertices $v$ by $\listtarget(v)$ from greatest to least, breaking ties arbitrarily, and coloring greedily.
\end{proof}

\subsection{Analyzing a random partial coloring}

We prove Lemma~\ref{partial coloring lemma} by analyzing a partial $(L, M)$-coloring of $G$ chosen uniformly at random and using the Local Lemma to show that with nonzero probability, the random partial coloring satisfies Lemma~\ref{partial coloring lemma}.  Instead of using the Local Lemma, Molloy~\cite{M17} used the entropy compression technique.  The key insight of Bernshteyn~\cite{B17} was that a clever application of the Local Lemma is sufficient, and this greatly simplified the proof.  In order to apply the Local Lemma to prove Lemma~\ref{partial coloring lemma}, we will need the following lemma.

\begin{lemma}\label{probability lemma}
  Let $v\in V(G)$ and fix a partial $(L, M)$-coloring $\subpartialcol$ of $G - N[v]$.  Let $\nbrhoodpartialcol$ be a partial $(L_{\subpartialcol}, M_{\subpartialcol})$ coloring of $G[N(v)]$ chosen uniformly at random, and let $\partialcol = \subpartialcol + \nbrhoodpartialcol$.  Let
  \begin{enumerate}
  \item $A_{v, \subpartialcol}$ be the event that $|L_\partialcol(v)| < \listtarget(v)$ and
  \item $B_{v, \subpartialcol}$ be the event that $v$ has at least $\listtarget(v)$ $\partialcol$-uncolored neighbors $u$ such that $|L_\partialcol(u)| \geq \listtarget(u) \geq \listtarget(v)$.
  \end{enumerate}
  Then $\Prob{A_{v,\subpartialcol}}, \Prob{B_{v,\subpartialcol}} \leq \Delta^{-3}/8$.
\end{lemma}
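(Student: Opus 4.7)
The plan is to analyze the uniform random partial coloring $\nbrhoodpartialcol$ of $G[N(v)]$ and bound $\Pr[A_{v,\subpartialcol}]$ and $\Pr[B_{v,\subpartialcol}]$ separately via concentration, following the general strategy of Molloy~\cite{M17} as simplified by Bernshteyn~\cite{B17}, but adapted to the blackbox hypothesis on $\indthreshold(v)$ and to a non-constant $\listtarget$.

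For event $A$, I would first lower-bound $\Pr[c \in L_\partialcol(v)]$ for each color $c \in L(v)$. Let $N_c \subseteq N(v)$ consist of those neighbors $u$ for which some $c_u \in L_\subpartialcol(u)$ is matched to $c$ under $M_{uv}$; then $c \in L_\partialcol(v)$ iff no $u \in N_c$ is assigned $c_u$ by $\nbrhoodpartialcol$. A counting argument over the partial colorings of $G[N_c]$ that avoid the forbidden assignments $u \mapsto c_u$ bounds this probability from below in terms of $\avgind(H)$ for an appropriate induced subgraph $H \subseteq G[N_c]$ satisfying $\numind(H) \geq \listthreshold(v)$. The hypothesis $|L(v)| \geq 2\listthreshold(v)\listtarget(v)/(\varepsilon(\varepsilon-2\varepsilon^2))$ absorbs the negligible contribution of colors for which this threshold fails, and combining this with $|L(v)| \geq (1+2\varepsilon)\deg(v)/\indthreshold(v)$ yields $\mathbb{E}[|L_\partialcol(v)|] \geq (1+\varepsilon)\listtarget(v)$. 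Applying Lemma~\ref{chernoff bounds} (after checking that the indicators $\mathbf{1}[c \in L_\partialcol(v)]$ are negatively correlated) together with condition~(1) then gives $\Pr[A_{v,\subpartialcol}] \leq \Delta^{-3}/8$.

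For event $B$, I would bound the expected number of neighbors $u \in N(v)$ with $\listtarget(u) \geq \listtarget(v)$ that are simultaneously $\partialcol$-uncolored and satisfy $|L_\partialcol(u)| \geq \listtarget(u)$. Under the uniform random partial coloring, the probability that a given $u$ is $\partialcol$-uncolored is on the order of $1/|L_\subpartialcol(u)|$, and by the hypothesis $|L(u)| \geq 2\listthreshold(u)\listtarget(u)/(\varepsilon(\varepsilon-2\varepsilon^2))$ this quantity is much smaller than $\listtarget(v)/\deg(v)$, so the expected number of contributing neighbors is well below $\listtarget(v)$. A union bound over the at most $\binom{\deg(v)}{\listtarget(v)}$ candidate sets of $\listtarget(v)$ bad neighbors, combined with condition~(2) controlling the combinatorial factor $\binom{\deg(v)}{\listtarget(v)}/\listtarget(v)!$, then yields $\Pr[B_{v,\subpartialcol}] \leq \Delta^{-3}/8$.

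The main obstacle is the lower bound on $\Pr[c \in L_\partialcol(v)]$: one must relate the uniform random partial coloring to the quantities $\avgind(H)$ and $\numind(H)$ via a careful combinatorial identity, which is the crux of the blackbox framework. Adapting the Molloy--Bernshteyn style arguments to correspondence coloring with a non-constant $\listtarget$ requires careful bookkeeping to ensure the threshold $\numind(H) \geq \listthreshold(v)$ is respected wherever $\indthreshold(v)$ is invoked, and to handle the reduced lists $L_\subpartialcol(u)$ (rather than the full $L(u)$) consistently throughout the counting.
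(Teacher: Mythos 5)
Your sketch of the bound on $\Prob{A_{v,\subpartialcol}}$ tracks the paper's proof closely: the key combinatorial identity you gesture at is precisely the paper's Proposition~\ref{ind set conditional}, relating $\Expect{\appear_c \mid \nbrhoodpartialcol^c}$ to $\avgind$ and $\Prob{\appear_c = 0 \mid \nbrhoodpartialcol^c}$ to $\numind$; the paper splits $L(v)$ into colors where $\numind(\randomcolorable(c)) \leq \listthreshold(v)$ happens with probability at least $\varepsilon$ versus the rest, bounds the size of the latter using the $\indthreshold(v)$ hypothesis, deduces $\Expect{|L_\partialcol(v)|} \geq \varepsilon(\varepsilon - 2\varepsilon^2)|L(v)|/\listthreshold(v) \geq 2\listtarget(v)$ via the second list-size hypothesis, and then applies the negatively-correlated Chernoff bound and condition~(1). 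That is essentially what you describe, so no complaints there.

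Your argument for $\Prob{B_{v,\subpartialcol}}$, however, has a genuine gap. You claim that $\Prob{u \text{ is } \partialcol\text{-uncolored}} \approx 1/|L_\subpartialcol(u)|$ is small ``by the hypothesis $|L(u)| \geq 2\listthreshold(u)\listtarget(u)/(\varepsilon(\varepsilon - 2\varepsilon^2))$.'' This does not work: $\subpartialcol$ is an arbitrary fixed partial coloring of $G - N[v]$, and it may color the neighbors of $u$ outside $N[v]$ so as to make $L_\subpartialcol(u)$ as small as you like, even empty, no matter how large $L(u)$ is. So the theorem's list-size hypotheses provide no lower bound on $|L_\subpartialcol(u)|$, and the ``expected number well below $\listtarget(v)$'' step is unjustified (and even if it held, Markov would only give a constant bound, not $\Delta^{-3}/8$). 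The correct source of the factor $\listtarget(v)!$ is the \emph{definition of the event $B$ itself}: if one conditions on $\nbrhoodpartialcol$ restricted to $N(v)\setminus X$ and supposes that all of $X$ is uncolored with $|L_\partialcol(u)| \geq \listtarget(u) \geq \listtarget(v)$ for every $u \in X$, then in particular $|L_\subpartialcol(u)| \geq \listtarget(v)$ for each $u \in X$, so there are at least $\listtarget(v)!$ ways to extend the conditioning to a coloring of $X$ and only one of them leaves all of $X$ uncolored. Hence $\Prob{X \text{ all bad}} \leq 1/\listtarget(v)!$, and the union bound over $\binom{\deg(v)}{\listtarget(v)}$ sets together with condition~(2) finishes. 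Your final union-bound step and appeal to condition~(2) are the right structure, but the per-set probability bound must come from the constraint built into $B$, not from the theorem's hypotheses on $|L(u)|$.
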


Lemma~\ref{probability lemma} generalizes Lemma~4.2 in the proof of Bernshteyn~\cite{B17} and Lemma~12 in the proof of Molloy~\cite{M17}.

To bound $\Prob{A_{v, \subpartialcol}}$ in Lemma~\ref{probability lemma}, we show that $|L_{\partialcol}(v)|$ is large in expectation and with high probablity is concentrated around its expectation, as in the following lemma.

\begin{lemma}\label{expectation concentration lemma}
  Under the assumptions of Lemma~\ref{probability lemma},
  \begin{equation}\label{expected list size bound}
    \Expect{|L_{\partialcol}(v)|} \geq \varepsilon(\varepsilon - 2\varepsilon^2)\frac{|L(v)|}{\listthreshold(v)},
  \end{equation}
  and
  \begin{equation}\label{concentration of expected list size}
    \Prob{|L_\partialcol(v)| - \Expect{|L_{\partialcol}(v)|}| > \frac{1}{2}\Expect{|L_{\partialcol}(v)|}} \leq 2\exp\left(\frac{-\Expect{|L_\partialcol(v)|}}{12}\right).
  \end{equation}
\end{lemma}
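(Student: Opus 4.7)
My plan is to prove the expectation bound~\eqref{expected list size bound} by linearity combined with a structural analysis of an auxiliary subgraph of $G[N(v)]$ for each colour, and to prove the concentration bound~\eqref{concentration of expected list size} by applying Lemma~\ref{chernoff bounds} to negatively correlated damage indicators. The whole argument follows the template of Molloy~\cite{M17} and Bernshteyn~\cite{B17}, adapted to the correspondence setting and to the local parameters $\listthreshold(v)$ and $\indthreshold(v)$.

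For each $c \in L(v)$, let $T_c \subseteq N(v)$ denote the set of neighbours $u$ for which some colour $c'_u \in L_\subpartialcol(u)$ corresponds to $c$ under $M_{vu}$, and define the auxiliary graph $H_c$ on vertex set $T_c$ by placing an edge between $u$ and $u'$ exactly when $uu' \in E(G)$ and $c'_u c'_{u'} \in M_{uu'}$; this is a subgraph of $G[N(v)]$. Colour $c$ survives in $L_\partialcol(v)$ if and only if the bad set $B_c(\nbrhoodpartialcol) := \{u \in T_c : \nbrhoodpartialcol(u) = c'_u\}$ is empty, and $B_c$ is always an independent set of $H_c$. A standard ``uncolour the bad set'' injection yields $\Prob{c \text{ survives}} \geq 1/\numind(H_c)$; the key step is refining this via a double counting of pairs (partial colouring with $B_c = \emptyset$, independent set $I$ of $H_c$ whose insertion is a valid augmentation) to obtain a bound involving $\avgind(H_c)$ whenever $H_c$ has at least $\listthreshold(v)$ independent sets. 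I then partition $L(v)$ into colours with $\numind(H_c) < \listthreshold(v)$ (for which the weak bound gives $\Prob{c \text{ survives}} \geq 1/\listthreshold(v)$) and those with $\numind(H_c) \geq \listthreshold(v)$ (for which $\avgind(H_c) \geq \indthreshold(v)$ by the definition of $\indthreshold(v)$). Since each neighbour of $v$ kills at most one colour of $L(v)$ (the matching $M_{vu}$ is one-to-one), the hypothesis $|L(v)| \geq (1 + 2\varepsilon)\deg(v)/\indthreshold(v)$ provides enough surplus in $L(v)$ beyond $\deg(v)/\indthreshold(v)$ to yield the explicit constant $\varepsilon(\varepsilon - 2\varepsilon^2)$ appearing in~\eqref{expected list size bound}.

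For the concentration bound, set $X_c = 1_{c \notin L_\partialcol(v)}$ so that $X := \sum_c X_c = |L(v)| - |L_\partialcol(v)|$. I would verify that the $X_c$ are negatively correlated: conditioning on a subfamily $\{X_{c_i} = 1\}_i$ forces some neighbours of $v$ to use specified colours, and since each chosen colour kills at most one element of $L(v)$, this can only decrease the chance that a further colour is damaged. Lemma~\ref{chernoff bounds} applied to $X$ with $t = \tfrac{1}{2}\Expect{|L_\partialcol(v)|}$ then gives~\eqref{concentration of expected list size} in the main regime, with a short case analysis covering $\Expect{X} < t$ (where one uses either the upper-tail Chernoff with a smaller deviation, or the deterministic bound $|L_\partialcol(v)| \leq |L(v)|$ to rule out the upper deviation). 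The main obstacle will be the refined expectation estimate: setting up the double counting so that the matching data $M$ is cleanly absorbed by $H_c$ and tracking the constants through the two-regime split will require care, but the structure should mirror the arguments in~\cite{M17,B17}.
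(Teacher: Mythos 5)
Your high-level template for \eqref{expected list size bound} (per-colour survival probability at least the reciprocal of a count of independent sets, a split of $L(v)$ at the threshold $\listthreshold(v)$, and a counting argument against $\deg(v)$) has the right shape, but your dichotomy is taken at the wrong level, and this is a genuine gap. You classify $c$ by the \emph{deterministic} quantity $\numind(H_c)$, where $H_c$ lives on all neighbours that could in principle receive a colour corresponding to $c$. The quantities that actually matter are attached to the \emph{random residual} graph: conditioning on everything except the ``$c$-corresponding'' choices (the paper conditions on $\nbrhoodpartialcol^c$, the colouring with the partners of $c$ uncoloured), the conditional survival probability is $1/\numind$ of the conflict graph on the neighbours still free to take the partner of $c$, and the conditional expectation of $\appear_c$ is $\avgind$ of that same residual graph (Proposition~\ref{ind set conditional}). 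Even when $\numind(H_c)\geq \listthreshold(v)$, the residual set is typically much smaller than $T_c$ (most of $T_c$ gets coloured with other colours or blocked), so you cannot conclude $\Expect{|B_c|}\geq \indthreshold(v)$, and hence your step ``there are at most about $\deg(v)/\indthreshold(v)$ bad colours'' is unjustified. The paper's dichotomy is probabilistic: $c\in L_1(v)$ if with probability at least $\varepsilon$ the residual set spans at most $\listthreshold(v)$ independent sets, in which case $\Prob{\appear_c=0}\geq \varepsilon/\listthreshold(v)$ (this conditioning is exactly where the extra factor $\varepsilon$ in $\varepsilon(\varepsilon-2\varepsilon^2)$ comes from -- your deterministic split would not produce that constant); otherwise $\Expect{\appear_c}\geq(1-\varepsilon)\indthreshold(v)$, so $|L_2(v)|\leq \deg(v)/((1-\varepsilon)\indthreshold(v))$. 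A smaller issue you would also have to address: $\indthreshold(v)$ is defined as a minimum over \emph{induced} subgraphs of $G[N(v)]$, while your $H_c$ is in general a proper (spanning) subgraph of $G[T_c]$ in the correspondence setting, so ``$\avgind(H_c)\geq\indthreshold(v)$ by definition'' does not literally apply.

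The concentration step also does not prove \eqref{concentration of expected list size} as stated. You apply Lemma~\ref{chernoff bounds} to the kill count $X=\sum_c 1[c\notin L_\partialcol(v)]$ with $t=\tfrac12\Expect{|L_\partialcol(v)|}$, which yields $2\exp\left(-\Expect{|L_\partialcol(v)|}^2/(12\,\Expect{X})\right)$; this matches the claimed exponent $-\Expect{|L_\partialcol(v)|}/12$ only when $\Expect{|L_\partialcol(v)|}\geq\Expect{X}$, i.e.\ when at least half of $L(v)$ survives in expectation. In the regime where the lemma is used, the guaranteed survival rate per colour is only of order $1/\listthreshold(v)$, so $\Expect{X}$ can be as large as $\min\{\deg(v),|L(v)|\}$ while $\Expect{|L_\partialcol(v)|}$ is only guaranteed to be about $2\listtarget(v)$; your exponent is then weaker by roughly a factor of $\listthreshold(v)$, which is not enough downstream in Lemma~\ref{probability lemma}, where $2\exp(-\listtarget(v)/6)\leq\Delta^{-3}/8$ must follow from $\Expect{|L_\partialcol(v)|}\geq 2\listtarget(v)$ and $\listtarget(v)\geq 18\ln(3\Delta)$ alone. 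The paper instead applies the negatively-correlated Chernoff bound directly to the survival indicators $X_c$ (equal to $1$ exactly when $\appear_c=0$), whose sum \emph{is} $|L_\partialcol(v)|$, so the variance proxy is $\Expect{|L_\partialcol(v)|}$ itself and \eqref{concentration of expected list size} drops out immediately. Note also that your negative-correlation heuristic is stated for the kill indicators; negative correlation of the kills neither implies nor is implied by negative correlation of the survivals, which is what is actually needed.
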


Before proving Lemma~\ref{expectation concentration lemma}, we need some definitions.  For the remainder of this subsection, we assume $v, \subpartialcol, \nbrhoodpartialcol,$ and $\partialcol$ are as in Lemma~\ref{probability lemma}.

\begin{definition}
  For each $c\in L(v)$, let the random variable
  \begin{equation*}
    \appear_c = |\{u\in N(v) : c\nbrhoodpartialcol(u)\in M_{vu}\}|,
  \end{equation*}
  i.e.\ the number of neighbors $u$ of $v$ such that $\nbrhoodpartialcol(u)$ corresponds to $c$.
\end{definition}

Note that
\begin{equation}\label{expected list size equation}
  \Expect{|L_\partialcol(v)|} = \sum_{c\in L(v)}\Prob{\appear_c = 0}.
\end{equation}

By \eqref{expected list size equation}, in order to prove Lemma~\ref{expectation concentration lemma}, we need some bounds on $\Prob{\appear_c = 0}$ for the colors $c\in L(v)$.  These bounds will depend on the average size and number of independent sets of certain subgraphs induced by neighbors $u$ of $v$ such that $L(u)$ contains a color corresponding to $c$.  
\begin{definition}
  Fix $c\in L(v)$ and a partial $(L_{\subpartialcol}, M_{\subpartialcol})$ coloring $\nbrhoodpartialcol'$ of $G[N(v)]$ such that for no neighbor $u$ of $v$, the color $\nbrhoodpartialcol'(u)$ corresponds to $c$.  Let $\colorable(c, \nbrhoodpartialcol')$ denote the $\nbrhoodpartialcol'$-uncolored neighbors $u$ of $v$ such that $L_{\subpartialcol + \nbrhoodpartialcol'}(u)$ contains $c$, i.e.\ the $\nbrhoodpartialcol'$-uncolored neighbors of $v$ that can be colored $c$ without creating conflicts.  
\end{definition}
\begin{definition}
  For each $c\in L(v)$, let $\nbrhoodpartialcol^c$ be the partial coloring obtained from $\nbrhoodpartialcol$ by uncoloring any neighbor $u$ of $v$ such that $\nbrhoodpartialcol(u)$ corresponds to $c$.
\end{definition}
\begin{proposition}\label{ind set conditional}
  Fix $c\in L(v)$ and a partial $(L_{\subpartialcol}, M_{\subpartialcol})$-coloring $\nbrhoodpartialcol'$ such that for no neighbor $u$ of $v$, the color $\nbrhoodpartialcol'(u)$ corresponds to $c$.  Then
  \begin{enumerate}[(i)]
  \item $\Expect{\appear_c | \nbrhoodpartialcol^c = \nbrhoodpartialcol'} = \avgind(G[\colorable(c, \nbrhoodpartialcol')])$, and
  \item $\Prob{\appear_c = 0 | \nbrhoodpartialcol^c = \nbrhoodpartialcol'} = \numind(\colorable(c, \nbrhoodpartialcol'))^{-1}$.
  \end{enumerate}
\end{proposition}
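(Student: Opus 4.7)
The plan is to describe the conditional distribution of $\nbrhoodpartialcol$ given $\nbrhoodpartialcol^c = \nbrhoodpartialcol'$, then read off both claims at once. Conditioning fixes $\nbrhoodpartialcol(u) = \nbrhoodpartialcol'(u)$ for every $\nbrhoodpartialcol'$-colored vertex $u$, because by hypothesis $\nbrhoodpartialcol'(u)$ does not correspond to $c$ and hence $u$ is untouched by the operation $\nbrhoodpartialcol \mapsto \nbrhoodpartialcol^c$. At each $\nbrhoodpartialcol'$-uncolored neighbor $u$ of $v$, the only possibilities for $\nbrhoodpartialcol(u)$ are $\blank$ and the color $c_u \in L(u)$ corresponding to $c$ via $M_{vu}$ (if no such $c_u$ exists, only $\blank$ is allowed). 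Thus the only conditional randomness is the set $S = \{u \in N(v) : \nbrhoodpartialcol(u) = c_u\}$, and $\appear_c = |S|$.

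Next I would show that the admissible sets $S$ are precisely the independent sets of $G[\colorable(c, \nbrhoodpartialcol')]$. If $u \in S$, then $c_u \in L_\subpartialcol(u)$ (so that $\nbrhoodpartialcol$ is a valid partial coloring at $u$) and no $\nbrhoodpartialcol'$-colored neighbor of $u$ can carry a color corresponding to $c_u$ in $M$ (else $\nbrhoodpartialcol$ would contain a conflict at $u$), so $c_u \in L_{\subpartialcol + \nbrhoodpartialcol'}(u)$, giving $S \subseteq \colorable(c, \nbrhoodpartialcol')$. Moreover, no two adjacent $u, w \in S$ may satisfy $c_u c_w \in M_{uw}$, so $S$ is independent in (the appropriate correspondence-subgraph of) $G[\colorable(c, \nbrhoodpartialcol')]$. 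Conversely, any such independent set $S$ extends $\nbrhoodpartialcol'$ to a valid partial $(L_\subpartialcol, M_\subpartialcol)$-coloring in the obvious way --- color each $u \in S$ by $c_u$ and leave every other previously uncolored vertex blank --- with the validity check reducing exactly to the two conditions defining membership in $\colorable$ and independence of $S$.

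Because $\nbrhoodpartialcol$ is uniform over valid partial $(L_\subpartialcol, M_\subpartialcol)$-colorings of $G[N(v)]$, the bijection above shows that conditional on $\nbrhoodpartialcol^c = \nbrhoodpartialcol'$, the set $S$ is uniform over the independent sets of $G[\colorable(c, \nbrhoodpartialcol')]$. Part (i) then reads $\Expect{\appear_c | \nbrhoodpartialcol^c = \nbrhoodpartialcol'} = \Expect{|S|} = \avgind(G[\colorable(c, \nbrhoodpartialcol')])$ by the definition of $\avgind$, and part (ii) reads $\Prob{\appear_c = 0 | \nbrhoodpartialcol^c = \nbrhoodpartialcol'} = \Prob{S = \emptyset} = 1/\numind(\colorable(c, \nbrhoodpartialcol'))$, since the empty set is one of $\numind$ equally likely independent sets. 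The main obstacle is the converse direction above: one must verify that extending $\nbrhoodpartialcol'$ by an arbitrary independent set $S$ produces a valid coloring, i.e.\ that conflicts with $\nbrhoodpartialcol'$-colored neighbors outside $S$ are ruled out --- this is exactly what $u \in \colorable(c, \nbrhoodpartialcol')$ guarantees, while conflicts between pairs inside $S$ are exactly ruled out by the independence of $S$.
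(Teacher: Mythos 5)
Your argument is correct, and it is exactly the verification the paper leaves implicit: Proposition~\ref{ind set conditional} is stated without proof, and the intended justification is precisely your description of the conditional law --- given $\nbrhoodpartialcol^c = \nbrhoodpartialcol'$, the colored vertices of $\nbrhoodpartialcol'$ are frozen, the only freedom is the set $S$ of $\nbrhoodpartialcol'$-uncolored neighbors receiving their color $c_u$ matched to $c$ by $M_{uv}$, this $S$ is uniform over the admissible sets by uniformity of $\nbrhoodpartialcol$, and $\appear_c = |S|$, so (i) and (ii) are just the definitions of $\avgind$ and $\numind$. (Your reading that partial colorings are proper on their colored vertices is the right one; the paper's formal definition omits properness, but it is clearly intended, e.g.\ Proposition~\ref{combining colorings prop} requires it.)

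One caveat, which you noticed in your forward implication (``the appropriate correspondence-subgraph'') and then dropped in your final sentence: for a general correspondence assignment the admissible sets $S$ are the independent sets of the spanning subgraph of $G[\colorable(c, \nbrhoodpartialcol')]$ with edge set $\{uw \in E(G) : c_u c_w \in M_{uw}\}$, not of $G[\colorable(c, \nbrhoodpartialcol')]$ itself; two adjacent uncolored neighbors whose colors corresponding to $c$ are not matched by $M_{uw}$ may both lie in $S$. The two graphs coincide for list assignments (there $c_u = c_w = c$), but in general the conflict graph can be a proper spanning subgraph, and the stated equalities hold for it rather than for the induced subgraph. This identification is an imprecision inherited from the statement itself (and it is harmless for the way the proposition is used via Lemma~\ref{probability lemma}, since the conflict graph is still a subgraph of $G[N(v)]$ and so has no larger clique), but a fully rigorous write-up should state and prove the proposition for that conflict subgraph.
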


\begin{definition}
  Let $\randomcolorable(c)$ denote the random set of neighbors $u$ of $v$ such that $L_{\subpartialcol}(u)$ contains $c$ and $\nbrhoodpartialcol(u)\in\{c, \blank\}$.
\end{definition}

We can now prove Lemma~\ref{expectation concentration lemma}.
\begin{proof}[Proof of Lemma~\ref{expectation concentration lemma}]
  
  First we prove that \eqref{expected list size bound} holds.  We divide $L(v)$ into two parts in the following way.  For each $c\in L(v)$, we let $c\in L_1(v)$ if $\Prob{\numind(\randomcolorable(c)) \leq \listthreshold(v)} \geq \varepsilon,$ and otherwise we let $c\in L_2(v)$.
  
  First we claim that $|L_2(v)| \leq |L(v)|/(1 + \varepsilon - 2\varepsilon^2)$.  If $c\in L_2(v)$, by Proposition~\ref{ind set conditional} and the definition of $\indthreshold(v)$,
  \begin{equation}
    \label{eq:appear-expectation-lower-bound}
    \Expect{\appear_c} \geq (1 - \varepsilon)\indthreshold(v).
  \end{equation}
  However,
  \begin{equation}
    \label{eq:appear-expectation-upper-bound}
    \sum_{c\in L_2(v)}\Expect{\appear_c} \leq \deg(v).
  \end{equation}
  By \eqref{eq:appear-expectation-lower-bound} and \eqref{eq:appear-expectation-upper-bound}, $(1 - \varepsilon)\indthreshold(v)|L_2(v)| \leq \deg(v)$.  Since $|L(v)| \geq (1 + 2\varepsilon)\frac{\deg(v)}{\indthreshold(v)}$, by rearranging terms,
  \begin{equation*}
    |L_2(v)| \leq \frac{\deg(v)}{(1 - \varepsilon)\indthreshold(v)} \leq \frac{|L(v)|}{(1 - \varepsilon)(1 + 2\varepsilon)},
  \end{equation*}
  as claimed.  Since $|L(v)| = |L_1(v)| + |L_2(v)|$, this implies $|L_1(v)| \geq (\varepsilon - 2\varepsilon^2)|L(v)|$.

  If $c\in L_1(v)$, by Proposition~\ref{ind set conditional} and the definition of $L_1(v)$,
  \begin{equation}\label{eq:L_1 appear probability}
    \Prob{\appear_c = 0} \geq \frac{\varepsilon}{\listthreshold(v)}.
  \end{equation}
  By \eqref{expected list size equation} and \eqref{eq:L_1 appear probability},
  \begin{equation*}
    \Expect{|L_\partialcol(v)|} \geq \frac{\varepsilon}{\listthreshold(v)}|L_1(v)| \geq \varepsilon(\varepsilon - 2\varepsilon^2)\frac{|L(v)|}{\listthreshold(v)},
  \end{equation*}
  as desired.

  Now we prove that \eqref{concentration of expected list size} holds.
  By \eqref{expected list size equation}, $\Expect{|L_\partialcol(v)|}$ is a sum of Boolean random variables 
  \begin{equation*}
    X_c = \left\{
      \begin{array}{l l}
        1 & \text{if } \appear_c = 0,\\
        0 & \text{if } \appear_c > 0,
      \end{array}
    \right.
  \end{equation*}
  for each $c\in L(v)$.  Since the random variables $X_c$ are negatively correlated, it follows from Lemma~\ref{chernoff bounds} with $t = \Expect{L_\partialcol(v)}/2$.
\end{proof}

We can now prove Lemma~\ref{probability lemma}.
\begin{proof}[Proof of Lemma~\ref{probability lemma}]
First we prove that $\Prob{A_{v, \subpartialcol}} \leq \Delta^{-3}/8$.  Since $|L(v)| \geq \frac{2\listtarget(v)\listthreshold(v)}{\varepsilon(\varepsilon - 2\varepsilon^2)}$, Lemma~\ref{expectation concentration lemma} implies that
  \begin{equation}\label{expected list at least twice target}
    \Expect{|L_\partialcol(v)|} \geq 2\listtarget(v).
  \end{equation}
  
  Therefore by the definition of $A_{v, \subpartialcol}$,
  \begin{equation*}\label{prob A bound}
    \Prob{A_{v,\subpartialcol}} \leq \Prob{|L_\partialcol(v)| - \Expect{|L_{\partialcol}(v)|}| > \frac{1}{2}\Expect{|L_{\partialcol}(v)|}}.
  \end{equation*}
  
  Now by Lemma~\ref{expectation concentration lemma}, \eqref{expected list at least twice target}, and the hypothesis that $\listtarget(v) \geq 18\ln(3\Delta)$,
  \begin{equation*}
    \Prob{A_{v,\subpartialcol}} \leq 2\exp\left(-\listtarget(v)/6\right) \leq \Delta^{-3}/8,
  \end{equation*}
  as desired.

  It remains to bound $\Prob{B_{v, \subpartialcol}}$.
  Let $X$ be any set of $\listtarget(v)$ neighbors $u$ of $v$ such that $|L_\partialcol(u)| \geq \listtarget(u) \geq \listtarget(v)$.  For any partial $(L_{\subpartialcol}, M_{\subpartialcol})$ coloring $\nbrhoodpartialcol'$ of $N(v)$ such that the vertices in $X$ are $\nbrhoodpartialcol'$-uncolored, there are at least $\listtarget(v)!$ partial $(L_{\subpartialcol + \nbrhoodpartialcol'}, M_{\subpartialcol + \nbrhoodpartialcol'})$ colorings of $X$, and in only one of them all of $X$ is uncolored.  Therefore the probability that every vertex of $X$ is $\partialcol$-uncolored is at most $\frac{1}{\listtarget(v)!}$.  By the Union Bound and the assumption that $\binom{\deg(v)}{\listtarget(v)}/\listtarget(v)! \leq \Delta^{-3}/8$, this implies $\Prob{B_{v, \subpartialcol}} \leq \Delta^{-3}/8$, as desired.  
\end{proof}

\subsection{Finding the partial coloring}
In this subsection, we prove Lemma~\ref{partial coloring lemma}.  Recall that Lemma~\ref{partial coloring lemma} implies Theorem~\ref{molloy blackbox}.
\begin{proof}[Proof of Lemma~\ref{partial coloring lemma}]
  Let $\partialcol$ be a partial $(L, M)$-coloring of $G$ chosen uniformly at random.  For each $v\in V(G)$, let $A_v$ be the event that $|L_\partialcol(v)| < \listtarget(v)$, let $B_v$ be the event that $v$ has at least $\listtarget(v)$ $\partialcol$-uncolored neighbors $u$ such that $|L_{\partialcol}(u)| \geq \listtarget(u) \geq \listtarget(v)$, and let $\Gamma(v)$ denote the set of vertices of distance at most three from $v$ in $G$.  Note that for all $v\in V(G)$, $|\Gamma(v)| \leq \Delta^3$.

  First we claim that with nonzero probability, none of the events $(A_v\cup B_v)$ occur in the random partial coloring $\partialcol$.  By the Local Lemma (Lemma~\ref{local lemma}), it suffices to show that for each $v\in V(G)$ and $Z\subseteq V(G)\backslash \Gamma(v)$, $\Prob{A_v | \bigcap_{u\in Z}\overline{A_u\cup B_u}} \leq \Delta^{-3}/8$ and $\Prob{B_v | \bigcap_{u\in Z}\overline{A_u\cup B_u}} \leq \Delta^{-3}/8$.  This follows from Lemma~\ref{probability lemma}.
  
  Therefore there is a partial $(L, M)$-coloring $\partialcol'$ for which none of the events $(A_v\cup B_v)$ occur.  We claim that $\partialcol'$ satisfies Lemma~\ref{partial coloring lemma}.  Suppose not.  If for some $v\in V(G)$, condition (1) is not satisfied, then $A_v$ holds, a contradiction.  Therefore we may assume for some $v\in V(G)$, condition (2) is not satisfied, that is $v$ has fewer than $\listtarget(v)$ $\partialcol'$-uncolored neighbors $u$ such that $\listtarget(u) \geq \listtarget(v)$.  Since $B_v$ holds, for some neighbor $u$, $|L_{\partialcol'}(u)| < \listtarget(u)$, contradicting that $A_u$ does not hold.  Therefore $\partialcol'$ satisfies Lemma~\ref{partial coloring lemma}, as claimed.
\end{proof}

\section{Proofs of Theorems \ref{mixed local thm} and \ref{local tri-free best constant}}
\label{mixed local section}

In this section we prove Theorems~\ref{mixed local thm} and~\ref{local tri-free best constant}.  In this section, $\log$ means the base 2 logarithm.
\subsection{Bounding the Average Size of an Independent Set}

We will use Theorem~\ref{molloy blackbox}, so we will need a lower bound on $\indthreshold(v)$.  We do this by bounding the average size of an independent set in terms of the total number.  In the proof of Theorem~\ref{Kr-free bound}, Molloy~\cite{M17} and Bernshteyn~\cite{B17} use the following result of Shearer~\cite{Sh95}, which we will also need.

\begin{lemma}[\cite{Sh95}]\label{shearers bound}
  If $H$ is a graph with no clique of size greater than $\omega$, then
  \begin{equation*}
    \avgind(H) \geq \frac{\log(\numind(H))}{2\omega\log(\log(\numind(H)))}.
  \end{equation*}
\end{lemma}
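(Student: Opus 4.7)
The plan is to prove Lemma~\ref{shearers bound} by the entropy method, which is natural since $\avgind(H)$ and $\log\numind(H)$ are precisely the two fundamental quantities attached to the uniform distribution on independent sets of $H$. I would let $I$ be an independent set of $H$ chosen uniformly at random, so that the Shannon entropy satisfies $\mathbf{H}(I) = \log\numind(H)$ (the entropy of the uniform distribution on $\numind(H)$ elements) while $\mathbb{E}|I| = \avgind(H)$. Writing $X_v = \mathbf{1}_{v \in I}$ and $p_v = \Pr(v \in I)$, one has $\sum_v p_v = \mathbb{E}|I|$, and by subadditivity of entropy $\mathbf{H}(I) \leq \sum_v \mathbf{H}(X_v) = \sum_v h(p_v)$, where $h$ is the binary entropy function. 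The elementary inequality $h(p) \leq p \log(e/p)$ already converts this into a bound involving only the $p_v$'s, but by itself only a crude one.

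To recover the factor $\omega$ and the $\log\log\numind(H)$ factor in the denominator, I would exploit the clique hypothesis through a Shearer-type clique-cover inequality. Since $H$ has clique number at most $\omega$, one can cover $V(H)$ by cliques $K_1,\ldots,K_t$ of size at most $\omega$, and within each clique at most one vertex of $I$ can lie, so $\mathbf{H}(I \cap K_j) \leq \log(\omega+1)$. Combining this with Shearer's entropy covering lemma yields a bound of the shape $\mathbf{H}(I) \leq \omega \sum_j \mathbf{H}(I \cap K_j)$ up to constants, which can in turn be refined into a vertex-local statement weighted by the $p_v$'s.

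The main obstacle will be producing exactly the $\log\log\numind(H)$ in the denominator. This should emerge from splitting the vertices into those with $p_v$ above or below a threshold of order $1/\log\numind(H)$: for small $p_v$ the term $h(p_v) \approx p_v \log(1/p_v)$ is bounded by $p_v \log\numind(H)$ using the trivial lower bound $p_v \geq 1/\numind(H)$, while for larger $p_v$ the clique-cover estimate dominates and contributes the factor $\omega$. Optimizing the threshold and combining the two regimes cleanly to land the sharp constant $2\omega$ is the delicate analytic step that I expect will require the most care.
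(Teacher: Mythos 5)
The paper does not prove this lemma: it is stated with the attribution ``[\cite{Sh95}]'' and used as a black box, so there is no internal proof to compare against. The relevant quantity to assess, then, is whether your sketch would actually deliver Shearer's bound, and as written it would not.

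The entropy setup in your first paragraph is sound and standard: letting $I$ be uniform on independent sets gives $\mathbf{H}(I)=\log\numind(H)$ and $\mathbb{E}|I|=\avgind(H)$, subadditivity gives $\mathbf{H}(I)\le\sum_v h(p_v)$, and $h(p)\le p\log(e/p)$ converts this to a bound in the $p_v$'s. But this much only reproduces a bound of Alon's type (Lemma~\ref{alons bound} in the paper), which has $\log(n/\cdot)$ where you need $\omega\log\log(\cdot)$; the clique-free hypothesis has not been used.

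The step where you try to bring in $\omega$ is where the argument breaks. The inequality you claim from a Shearer-type covering lemma, $\mathbf{H}(I)\le\omega\sum_j\mathbf{H}(I\cap K_j)$ ``up to constants,'' is not what the covering lemma says and goes in the unhelpful direction: Shearer's entropy lemma \emph{divides} by the cover multiplicity, so covering each vertex $k$ times gives $\mathbf{H}(I)\le\frac1k\sum_j\mathbf{H}(I\cap K_j)$, whereas multiplying by $\omega$ only weakens the bound. If instead you take a partition of $V(H)$ into cliques $K_1,\dots,K_t$ and use plain subadditivity together with $\mathbf{H}(I\cap K_j)\le\log(\omega+1)$, you get $\mathbf{H}(I)\le t\log(\omega+1)$, and since $t$ can be on the order of $n$ (or $n/\omega$ at best) this does not yield anything near the claimed bound. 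Finally, the step you yourself flag as ``the delicate analytic step'' --- splitting on a threshold for $p_v$ and optimizing to land the $2\omega\log\log\numind(H)$ denominator --- is precisely the content of the lemma, and it is not carried out; the trivial lower bound $p_v\ge1/\numind(H)$ you invoke in the small-$p_v$ regime produces a $\log\numind(H)$ factor, not the $\log\log\numind(H)$ that is needed. So the proposal is a plausible opening move followed by a gap exactly where the real work of Shearer's argument lies; as it stands it does not establish the lemma.
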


We will also need the following result of Alon \cite{A96}.
\begin{lemma}[\cite{A96}]\label{alons bound} 
  If $H$ is a graph on $n$ vertices, then
  \begin{equation*}    
  \avgind(H) \geq \frac{\log(\numind(H))}{10\log\left(n/\log(\numind(H)) + 1\right)}.
\end{equation*}
\end{lemma}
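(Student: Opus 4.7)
The plan is to reduce to a Markov-plus-counting bound on the number of independent sets of size at most twice the average. Let $N = \numind(H)$ and $L = \log N$. Since $N \leq 2^n$ we have $L \leq n$, so $\log(n/L + 1) \geq 1$, and the desired inequality is trivial when $\avgind(H) \geq L/10$. I focus on the remaining case $\alpha := \avgind(H) < L/10$.

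Let $I$ be a uniformly random independent set of $H$. Then $\Expect{|I|} = \alpha$, and Markov's inequality gives $\Prob{|I| \leq 2\alpha} \geq 1/2$, so at least $N/2$ independent sets of $H$ have size at most $2\alpha$. Each such set is in particular a subset of $V(H)$ of size at most $2\alpha$, so
\begin{equation*}
  N/2 \;\leq\; \sum_{k \leq 2\alpha} \binom{n}{k} \;\leq\; \left(\frac{en}{2\alpha}\right)^{2\alpha},
\end{equation*}
where the last inequality is the standard estimate valid when $2\alpha \leq n/2$ (which holds since $\alpha < L/10 \leq n/10$). Taking base-$2$ logarithms yields
\begin{equation*}
  L - 1 \;\leq\; 2\alpha \log\!\left(\frac{en}{2\alpha}\right).
\end{equation*}

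The remaining step is to turn the right-hand side into $\alpha \cdot \log(n/L + 1)$ up to a constant. Writing $x := L/(2\alpha)$, the displayed inequality becomes $x(1 - 1/L) \leq \log(e) + \log(n/L) + \log(x)$. Since $x > 5$, a bootstrap using $\log(x) \leq x/2$ (valid for $x \geq 4$) shows that $x = O(\log(n/L + 1))$; a careful accounting of the constants then yields $x \leq 5 \log(n/L + 1)$, i.e.\ $\alpha \geq L / (10 \log(n/L + 1))$, as required.

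The conceptual content of the argument is straightforward, so the real obstacle is not the structure of the proof but the constant $10$. The ratio of $\log(en/(2\alpha))$ to $\log(n/L + 1)$ depends on how small $\alpha$ is relative to $L$; in the borderline regime where $L$ is close to $n$ or where $\alpha$ is much smaller than $L$, one may need to split further (for instance, on whether $n/L$ exceeds some absolute threshold) in order to absorb stray constants like $\log(e)$ into the final factor of $10$.
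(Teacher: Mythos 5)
The paper does not prove this lemma; it is quoted directly from Alon~\cite{A96}, so there is no in-paper proof to compare with. Your outline (Markov's inequality, a binomial count of the low-weight independent sets, then take logarithms) is sound, and the reduction to $x(1-1/L) \leq \log e + \log(n/L) + \log x$ with $x = L/(2\alpha)$ is correct.

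There is, however, a genuine gap in the final step: the ``bootstrap using $\log x \leq x/2$'' gives only $x(1/2 - 1/L) \leq \log e + \log(n/L)$, which is vacuous when $L \leq 2$ and, even for $L$ large, yields $x \leq 5\log(n/L+1)$ only when $\log(n/L+1)$ comfortably exceeds $\log e$ --- so it fails precisely in the regime where $L$ is near $n$ and $\log(n/L+1)$ is close to $1$. Two missing observations are needed to close the argument. First, in the nontrivial case one actually has $\alpha \geq 1/2$: otherwise $\lfloor 2\alpha\rfloor = 0$ and your count forces $\numind(H) \leq 2$, where the bound is immediate. This gives $L = 2\alpha x \geq x$, hence $x - 1 \leq x(1-1/L)$. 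Second, one must feed the for-contradiction hypothesis $x > 5\log(n/L+1)$ back in as the sharper $\log(n/L) < \log(2^{x/5}-1)$, rather than the looser $\log(n/L) \leq x/5$. Together these yield $x - 1 - \log x - \log e < \log(2^{x/5}-1)$, which one verifies is impossible for all $x \geq 5$ (it fails at $x=5$ by a margin of about $0.23$, and the difference of the two sides is increasing). So the strategy can be made to work, but the decisive final inequality is a real calculus check rather than a one-line absorption of constants, and your closing sentence under-sells how much is actually still missing.
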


Since $\log(\numind(H)) \geq \alpha(H)$, we can replace the $\log(\numind(H))$ in the denominator of the bound in Lemma~\ref{alons bound} with $\alpha(H)$ to get a suitable bound if $H$ contains a large independent set.

The following lemma provides an improvement over Lemma \ref{shearers bound} for larger values of $\omega$.

\begin{lemma}\label{average independent set size bound}
  If $H$ is a graph on $n$ vertices with no clique of size greater than $\omega$ and $n$ is sufficiently large, then
  \begin{equation*}
    \avgind(H) \geq \frac{1}{24}\sqrt{\frac{\log(\numind(H))}{\log(\omega)}}.
  \end{equation*}
\end{lemma}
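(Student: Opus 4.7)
Set $t := \log(\numind(H))$; the goal is to show $\avgind(H) \ge (1/24)\sqrt{t/\log\omega}$. The natural approach is to split into two cases depending on the size of $\omega$ relative to $\sqrt{t\log\omega}/\log\log t$, and to apply the two available bounds, Shearer's (Lemma~\ref{shearers bound}) and Alon's (Lemma~\ref{alons bound}), in their respective regimes.

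Case 1 ($\omega$ small, i.e., $\omega\log\log t \le 12\sqrt{t\log\omega}$). Then Shearer's bound gives
\[
\avgind(H) \ge \frac{t}{2\omega\log\log t} \ge \frac{1}{24}\sqrt{\frac{t}{\log\omega}},
\]
where the second inequality is exactly the case assumption rearranged, so Case~1 is immediate from Lemma~\ref{shearers bound}.

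Case 2 ($\omega$ large, i.e., $\omega\log\log t > 12\sqrt{t\log\omega}$). Here Shearer is too weak, so we turn to Alon's bound, which yields $\avgind(H) \ge t/(10\log(n/t+1))$; to close the case it suffices to show $\log(n/t+1) \le (12/5)\sqrt{t\log\omega}$. Two ingredients feed into this. First, the trivial observation that the $n$ singletons together with the empty set give $n+1 \le \numind(H)$, so $\log n \le t$. Second, the Erd\H{o}s--Szekeres/Ramsey inequality $n \le \binom{\alpha(H)+\omega}{\omega}$ combined with $\alpha(H) \le t$ gives $\log n \le \omega\log(e(t+\omega)/\omega)$, which is sharper than the trivial bound in exactly the regime relevant to Case~2. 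Plugging whichever of these two bounds on $\log n$ is stronger into Alon's inequality delivers the target after a short computation. (If needed, one can also chain Shearer's bound to obtain $\alpha(H) \ge t/(2\omega\log\log t)$ and then apply the $\alpha$-version of Alon's bound noted after Lemma~\ref{alons bound}.)

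The main obstacle will be the \emph{borderline} of Case~2, where $\omega$ is just barely too large for Shearer to yield the claim; there both Shearer's and the naive form of Alon's bound come close to failing, and the analysis must carefully track second-order factors like $\log\log t$ and $\log\omega$. This is precisely where the ``sufficiently large $n$'' hypothesis is used: for $n$ large, $t \ge \log n$ is large, and second-order terms can be absorbed into the constants, balancing the $1/24$ in the target.
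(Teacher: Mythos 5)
There is a genuine gap, and it sits exactly where you placed your confidence: Case~2. Your Case~1 is fine apart from a transcription slip (with $t=\log(\numind(H))$, Shearer's bound from Lemma~\ref{shearers bound} reads $\avgind(H)\ge t/(2\omega\log t)$, not $t/(2\omega\log\log t)$, so the case threshold should be $\omega\log t\le 12\sqrt{t\log\omega}$). But in Case~2 your two upper bounds on $\log n$ are nowhere near strong enough to feed into Alon's bound. What the argument needs there is $\log(n/t+1)\le \tfrac{12}{5}\sqrt{t\log\omega}$, i.e.\ essentially $\log n\lesssim\sqrt{t\log\omega}$. The trivial bound $\log n\le t$ only gives this when $t=O(\log\omega)$, and the single-independent-set Ramsey bound $n\le\binom{\alpha(H)+\omega}{\omega}$ with $\alpha(H)\le t$ only gives $\log n\le\omega\log(e(t+\omega)/\omega)$, which grows linearly in $\omega$. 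Already at the (corrected) Case~2 boundary $\omega\approx 12\sqrt{t\log\omega}/\log t$ this is about $4.2\sqrt{t\log t}$ versus the needed $1.7\sqrt{t\log t}$, so the fixed constant $1/24$ is lost; and deeper into Case~2 the deficit grows polynomially in $t$ (e.g.\ for $\omega\approx t^{0.9}$, Shearer gives only about $t^{0.1}/\log t$ and Alon with your bounds gives about $t^{0.1}/\log t$ as well, while the target is of order $\sqrt{t/\log t}$). This is not a second-order issue that ``sufficiently large $n$'' absorbs; large $n$ is precisely the problematic situation, and your inequalities simply do not exclude $\log n\gg\sqrt{t\log\omega}$.

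The missing ingredient is the paper's Lemma~\ref{number of independent set size bound}: every $n$-vertex graph with no clique of size greater than $\omega$ satisfies $\log(\numind(H))\ge\log^2(n)/(2e\log\omega)$, equivalently $\log n\le\sqrt{2e\,t\log\omega}$. This is proved not by exhibiting one independent set of size $\alpha(H)$, but by double counting: with $\alpha\approx\log(n)/(e\log\omega)$ and $s=R(\alpha,\omega+1)$, every $s$-subset of vertices contains an independent $\alpha$-set, so $H$ has at least $\binom{n}{s}/\binom{n-\alpha}{s-\alpha}\ge((n-\alpha)/s)^{\alpha}$ independent sets of size $\alpha$, and the Erd\H{o}s--Szekeres bound on $s$ then yields the claim. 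Once this counting lemma is available, no case split is needed at all: the paper concludes directly from Alon's bound via $\avgind(H)\ge t/(10\log n)\ge t/(10\sqrt{2e\,t\log\omega})\ge\frac{1}{24}\sqrt{t/\log\omega}$. So your overall architecture (Shearer in one regime, Alon in the other) could be made to work only if you replace your bounds on $\log n$ by this quantitative ``many independent sets of medium size'' lemma, at which point Shearer and the case split become unnecessary for this particular statement.
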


We will actually use Lemma~\ref{alons bound} to prove Lemma~\ref{average independent set size bound}.  To apply Lemma~\ref{alons bound}, we need to upper bound $\log(n)$ in terms of $\log(\numind(H))$ and $\omega(H)$, as in the following lemma.

\begin{lemma}\label{number of independent set size bound}
  If $H$ is a graph on $n$ vertices with no clique of size greater than $\omega$ and $n$ is sufficiently large, then
  \begin{equation*}
    \log(\numind(H)) \geq \frac{\log^2(n)}{2e\log(\omega)}.
  \end{equation*}
\end{lemma}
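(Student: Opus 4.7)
The plan is to split into two regimes based on the relative size of $\omega$ and $n$, and to handle the nontrivial regime by induction on $n$.

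The easy regime is $\omega \geq n^{1/(2e)}$, where $\log^2(n)/(2e\log(\omega)) \leq \log(n)$, so the trivial lower bound $\numind(H) \geq n+1$ (from the empty set and all singletons) already gives $\log(\numind(H)) \geq \log(n)$ and verifies the inequality.

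In the remaining regime $\omega < n^{1/(2e)}$, I would proceed by induction on $n$. By Tur\'an's theorem applied to the $K_{\omega+1}$-free graph $H$, the average degree of $H$ is at most $(1-1/\omega)n$, so there exists a vertex $v$ of degree at most $(1-1/\omega)n$, giving $|V(H)\setminus N[v]| \geq n/\omega - 1$. Partitioning independent sets by whether they contain $v$ yields $\numind(H) = \numind(H-v) + \numind(H - N[v])$, and applying the inductive hypothesis to each of these smaller $K_{\omega+1}$-free subgraphs (falling back to the easy regime whenever a subgraph crosses the threshold $\omega \geq (n')^{1/(2e)}$) gives
\[
\numind(H) \geq 2^{g(n-1)} + 2^{g(n/\omega - 1)}, \qquad \text{where } g(x) := \log^2(x)/(2e\log(\omega)).
\]

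To close the induction, it suffices to show $2^{g(n)} - 2^{g(n-1)} \leq 2^{g(n/\omega - 1)}$. A direct computation gives $g(n) - g(n-1) = O(\log(n)/(n\log(\omega)))$ from $\log(n) - \log(n-1) \leq 1/((n-1)\ln(2))$, while $g(n-1) - g(n/\omega - 1) \geq (\log(n) - O(1))/e$ by expanding the difference of squares. Using $2^x - 1 \leq 2x\ln(2)$ for small $x$ bounds $2^{g(n)} - 2^{g(n-1)}$ by $O(2^{g(n-1)} \log(n)/(n\log(\omega)))$, whereas $2^{g(n/\omega - 1)} \geq \Omega(2^{g(n-1)} n^{-1/e})$. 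The required inequality reduces to $\log(n) \leq C n^{1-1/e}\log(\omega)$ for some absolute constant $C$, which holds for $n$ sufficiently large since $1 - 1/e > 0$. The main obstacle is careful bookkeeping: tracking constants so that both $n-1$ and $n/\omega - 1$ remain in a regime where the inductive hypothesis (or the easy case) applies, and choosing the ``sufficiently large'' threshold as a suitable polynomial in $\omega$ so that $n/\omega - 1$ never drops below it.
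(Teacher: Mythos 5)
Your strategy is correct in substance but genuinely different from the paper's. The paper proves the lemma by a one-shot double-counting argument: with $s = R(\alpha,\omega+1)$, every $s$-subset of $V(H)$ contains an independent set of size $\alpha$, so there are at least $\binom{n}{s}\big/\binom{n-\alpha}{s-\alpha} \geq \left((n-\alpha)/s\right)^{\alpha}$ independent sets of size $\alpha$; choosing $\alpha \approx \log(n)/(e\log(\omega))$ and invoking the Erd\H{o}s--Szekeres bound $R(\alpha,\omega+1)\leq\binom{\alpha+\omega-1}{\alpha-1}$ yields the exponent $\log^2(n)/(2e\log(\omega))$ directly. You instead combine the trivial bound $\numind(H)\geq n+1$ in the regime $\omega\geq n^{1/(2e)}$ with an induction on $n$ via the recurrence $\numind(H)=\numind(H-v)+\numind(H-N[v])$ at a minimum-degree vertex, using Tur\'an only to get $|V(H)\setminus N[v]|\geq n/\omega-1$; this avoids Ramsey numbers entirely but moves the work into the analytic comparison $2^{g(n)}-2^{g(n-1)}\leq 2^{g(n/\omega-1)}$. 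That comparison does close, and in fact it closes for every $n>\omega^{2e}$: the worst case is at the boundary $n\approx\omega^{2e}$, where (relative to $2^{g(n-1)}$) the left side is of order $\omega^{-2e}$ while the right side is of order $\omega^{-(2-\frac{1}{2e})}$, so with the easy regime as the base case your argument actually gives the inequality for all $n$, slightly stronger than the stated lemma. Two cautions on the write-up. First, your inequality for the second gap points the wrong way: what your displayed estimate $2^{g(n/\omega-1)}=\Omega\left(2^{g(n-1)}n^{-1/e}\right)$ actually requires is the \emph{upper} bound $g(n-1)-g(n/\omega-1)\leq \frac{\log(n)}{e}+o(\log(n))$, not a lower bound; the upper bound is what holds and what you should state. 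Second, do not plan to take the ``sufficiently large'' threshold to be a polynomial in $\omega$: the lemma is later applied to induced subgraphs of neighbourhoods in which $\omega$ can be comparable to the number of vertices, so the threshold must be absolute (or, as your scheme can deliver, absent); this works only if you verify the inductive step for all $n>\omega^{2e}$ -- with the easy regime absorbing every subgraph that falls below that size -- rather than for $n$ large in terms of $\omega$, and the constants do permit this.
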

\begin{proof}
  We may assume $\omega\geq 3$, or else $H$ has an independent set of size at least $\sqrt{n}$, and the result follows.
  
  Let $\alpha$ be some positive integer to be determined later, and let $s = R(\alpha, \omega + 1)$, the Ramsey number.  We will actually prove there are at least $2^\frac{\log^2(n)}{2e\log(\omega)}$ independent sets of size $\alpha$.
  
  By the definition of $s$, every subset of $V(H)$ of size $s$ has an independent set of size $\alpha$.  Since every independent set of size $\alpha$ is contained in at most $\binom{n - \alpha}{s - \alpha}$ subsets of $V(H)$ of size $s$, there are at least
  \begin{equation}\label{alpha ind sets}
    \binom{n}{s}\Big/\binom{n - \alpha}{s - \alpha} \geq \left(\frac{n - \alpha}{s}\right)^\alpha = 2^{\alpha(\log(n - \alpha) - \log(s))}
  \end{equation}
  independent sets of size $\alpha$.  

  We let $\alpha = \frac{\log(n)}{e\log(\omega)} + 1$.  By \eqref{alpha ind sets}, it suffices to show that $\log(n - \alpha) - \log(s) \geq \log(n)/2$.  It is well-known that $R(\alpha, \omega + 1) \leq \binom{\alpha + \omega - 1}{\alpha - 1} \leq \left(\frac{\alpha + \omega - 1}{\alpha - 1}\cdot e\right)^{\alpha - 1}$.  Therefore
  \begin{equation*}
    \log(s) \leq (\alpha - 1)\log\left(\frac{\alpha + \omega - 1}{\alpha - 1}\cdot e\right) \leq \frac{\log(n)}{e\log(\omega)}\log\left(e + \omega\frac{\log(\omega)}{\log(n)}\right).
  \end{equation*}
  Since $\alpha = o(n)$ and $\omega\geq 3$, for $n$ sufficiently large, $\log(n - \alpha) - \log(s) \geq \log(n) / 2$, as desired.
\end{proof}

Now we can prove Lemma~\ref{average independent set size bound}.
\begin{proof}[Proof of Lemma~\ref{average independent set size bound}]
  By Lemma~\ref{alons bound},
  \begin{equation*}
    \avgind(H) \geq \frac{\log(\numind(H))}{10\log(n)}.
  \end{equation*}
  By Lemma~\ref{number of independent set size bound},
  \begin{equation*}
    \log(n) \leq \sqrt{2e\log(\numind(H))\log(\omega)},
  \end{equation*}
  and the result follows.
\end{proof}

\subsection{The Proofs}
Lemmas~\ref{shearers bound}, \ref{alons bound}, and \ref{average independent set size bound} provide good enough bounds for $\indthreshold(v)$.  Now we are able to prove Theorems~\ref{mixed local thm} and~\ref{local tri-free best constant}.
\begin{proof}[Proof of Theorem~\ref{mixed local thm}]
  Let $\varepsilon = 1/4$, and let $v\in V(G)$.  We show that $v$ satisfies the conditions of Theorem~\ref{molloy blackbox}.  Let $\listtarget(v) = \deg(v)^{5/8}$, and let $\listthreshold(v) = \deg(v)^{1/4}$.  By Lemma~\ref{average independent set size bound},
  \begin{equation*}
    \indthreshold(v) \geq \frac{1}{48}\sqrt{\frac{\log(\deg(v))}{\log(\omega(v))}}.
  \end{equation*}
  By Lemma~\ref{shearers bound},
  \begin{equation*}
    \indthreshold(v) \geq \frac{\log(\deg(v))}{8\omega(v)\log(\log(\deg(v)))}.
  \end{equation*}
  Note that $\log(\numind(H)) \geq \alpha(H)$ for any graph $H$.  Hence if $H\subseteq G[N(v)]$, then $|V(H)|/\log(\numind(H)) \leq \chi(H) \leq \chi(v)$.  Therefore by Lemma~\ref{alons bound},
  \begin{equation*}
    \indthreshold(v) \geq \frac{\log(\deg(v))}{40\log(\chi(v) + 1)}.
  \end{equation*}
  Since $|L(v)| \geq 72\deg(v)f(v)$, it follows that $|L(v)| \geq (1 + 2\varepsilon)\frac{\deg(v)}{\indthreshold(v)}$, as desired.
  Note that $f(v)^8 = o(\deg(v))$.
  Since $\Delta$ is sufficiently large and $\deg(v) \geq \log^2(\Delta)$, we may assume $f(v) \geq 168\deg(v)^{-1/8}$.  Since $\listthreshold(v)\listtarget(v) = \deg(v)^{7/8}$, $|L(v)| \geq \frac{2\listthreshold(v)\listtarget(v)}{\varepsilon(\varepsilon - 2\varepsilon^2)}$, as desired.

  Since $\listtarget(v) \geq \ln^{5/4}(\Delta)$ and $\Delta$ is sufficiently large, $\listtarget(v) \geq 18\ln(3\Delta)$, as desired.  It remains to show that $\binom{\deg(v)}{\listtarget(v)}/\listtarget(v)! < \Delta^{-3}/8$.  We will use the following form of Stirling's approximation:
  \begin{equation*}
    n! \geq \sqrt{2\pi}n^{n + 1/2}e^{-n}.
  \end{equation*}
  Therefore
  \begin{equation}\label{binomial list threshold bound}
    \binom{\deg(v)}{\listtarget(v)}/ \listtarget(v)! < \frac{\deg(v)^{\listtarget(v)}}{(\listtarget(v)!)^2}  \leq \left(\frac{e^2\deg(v)}{2\pi\listtarget(v)^{2 + 1/\listtarget(v)}}\right)^{\listtarget(v)}.
  \end{equation}
  Since $\deg(v)/\listtarget(v)^{2 + 1/\listtarget(v)} \leq \listtarget(v)^{-2/5}$, by taking the logarithm of the bound in \eqref{binomial list threshold bound}, it suffices to show that
  \begin{equation*}
    \listtarget(v)\ln\left(\frac{2\pi\listtarget(v)^{2/5}}{e^2}\right) \geq 3\ln(8\Delta).
  \end{equation*}
  Since $\listtarget(v) \geq \ln^{5/4}(\Delta)$ and $\Delta$ is sufficiently large, this follows.
\end{proof}

\begin{proof}[Proof of Theorem~\ref{local tri-free best constant}]
  Let $\varepsilon = \xi/10$ and $\varepsilon' > 0$ be some constant to be chosen later, and let $v\in V(G)$.
  Let $\listtarget(v) = \deg(v)^{(1 + \varepsilon')/2}$, and let $\listthreshold(v) = \deg(v)^{(1 - 2\varepsilon')/2}$.  Since $G$ is triangle-free,
  \begin{equation*}
    \indthreshold(v) \geq \frac{\log(\listthreshold(v))}{2} = (1 - 2\varepsilon')\log(\deg(v))/4.
  \end{equation*}
  Since $|L(v)| \geq (4 + \xi)\deg(v)/\log(\deg(v))$, it follows that $|L(v)| \geq (1 + \xi/4)(1 - 2\varepsilon')\frac{\deg(v)}{\indthreshold(v)}$.  We choose $\varepsilon'$ sufficiently small so that $(1 + \xi/4)(1 - 2\varepsilon') \geq 1 + 2\varepsilon$.

  Note that $\listthreshold(v)\listtarget(v) = \deg(v)^{1-\varepsilon'/2}$.  Hence we may assume $\deg(v)$ is sufficiently large so that $|L(v)| \geq \frac{2\listthreshold(v)\listtarget(v)}{\varepsilon(2 - 2\varepsilon^2)}$, as desired.

  Note also that $\listtarget(v) \geq \ln^{1 + \varepsilon'}(\Delta)$.  The rest of the proof is similar to the proof of Theorem~\ref{mixed local thm}, so we omit it.
\end{proof}

\bibliographystyle{plain}
\bibliography{low-omega}

\begin{thebibliography}{10}

\bibitem{AKS80}
M.~Ajtai, J.~Koml\'os, and E.~Szemer\'edi.
\newblock A note on {R}amsey numbers.
\newblock {\em J. Combin. Theory Ser. A}, 29(3):354--360, 1980.

\bibitem{A96}
N.~Alon.
\newblock Independence numbers of locally sparse graphs and a {R}amsey type
  problem.
\newblock {\em Random Structures Algorithms}, 9(3):271--278, 1996.

\bibitem{B17}
A.~{Bernshteyn}.
\newblock {The Johansson--Molloy Theorem for DP-Coloring}.
\newblock {\em ArXiv e-prints}, August 2017.

\bibitem{B09}
T.~Bohman.
\newblock The triangle-free process.
\newblock {\em Adv. Math.}, 221(5):1653--1677, 2009.

\bibitem{BK10}
T.~Bohman and P.~Keevash.
\newblock The early evolution of the {$H$}-free process.
\newblock {\em Invent. Math.}, 181(2):291--336, 2010.

\bibitem{BPP17}
M.~Bonamy, T.~Perrett, and L.~Postle.
\newblock Colouring graphs with sparse neighbourhoods: Bounds and applications.
\newblock submitted.

\bibitem{DP17}
M.~Delcourt and L.~Postle.
\newblock On the list coloring version of reed's conjecture.
\newblock manuscript.

\bibitem{DP15}
Z.~Dvo\v{r}\'a{k} and L.~Postle.
\newblock Correspondence coloring and its application to list-coloring planar
  graphs without cycles of lengths 4 to 8.
\newblock {\em J. Combin. Theory Ser. B}, 129:38--54, 2018.

\bibitem{ERT79}
P.~Erd\H{o}s, A.~L. Rubin, and H.~Taylor.
\newblock Choosability in graphs.
\newblock In {\em Proceedings of the {W}est {C}oast {C}onference on
  {C}ombinatorics, {G}raph {T}heory and {C}omputing ({H}umboldt {S}tate
  {U}niv., {A}rcata, {C}alif., 1979)}, Congress. Numer., XXVI, pages 125--157.
  Utilitas Math., Winnipeg, Man., 1980.

\bibitem{J96-tri}
A.~Johansson.
\newblock Asymptotic choice number for triangle free graphs.
\newblock Unpublished Manuscript, 1996.

\bibitem{J96-Kr}
A.~Johansson.
\newblock The choice number of sparse graphs.
\newblock Unpublished Manuscript, 1996.

\bibitem{K95}
J.~H. Kim.
\newblock On {B}rooks' theorem for sparse graphs.
\newblock {\em Combin. Probab. Comput.}, 4(2):97--132, 1995.

\bibitem{K95ramsey}
J.~H. Kim.
\newblock The {R}amsey number {$R(3,t)$} has order of magnitude {$t^2/\log t$}.
\newblock {\em Random Structures Algorithms}, 7(3):173--207, 1995.

\bibitem{M17}
M.~{Molloy}.
\newblock {The list chromatic number of graphs with small clique number}.
\newblock {\em ArXiv e-prints}, January 2017.

\bibitem{PS15}
S.~Pettie and H-H. Su.
\newblock Distributed coloring algorithms for triangle-free graphs.
\newblock {\em Inform. and Comput.}, 243:263--280, 2015.

\bibitem{R98}
B.~Reed.
\newblock {$\omega,\ \Delta$}, and {$\chi$}.
\newblock {\em J. Graph Theory}, 27(4):177--212, 1998.

\bibitem{Sh95}
J.~B. Shearer.
\newblock On the independence number of sparse graphs.
\newblock {\em Random Structures Algorithms}, 7(3):269--271, 1995.

\bibitem{S77}
J.~Spencer.
\newblock Asymptotic lower bounds for {R}amsey functions.
\newblock {\em Discrete Math.}, 20(1):69--76, 1977/78.

\end{thebibliography}

\end{document}